\newtheorem{theorem}{Theorem}[section]
\newtheorem{lemma}{Lemma}[section]
\newtheorem{corollary}{Corollary}[section]
\begin{document}
\title{The sparse Kaczmarz method with surrogate hyperplane for the regularized basis pursuit problem}
\author{
Ze Wang\\
School of Mathematical Sciences, Tongji University, \\
N.O. 1239, Siping Road, Shanghai, 200092 \\
Email: math\_wangze@tongji.edu.cn\\
and\\
Jun-Feng Yin\\
School of Mathematical Sciences, \\
Key Laboratory of Intelligent Computing and Applications (Ministry of Education),\\
Tongji university, N.O. 1239, Siping Road, Shanghai, 200092 \\
Email:yinjf@tongji.edu.cn\\
and\\
Ji-Chen Zhao\\
School of Mathematical Sciences, Tongji University, \\
N.O. 1239, Siping Road, Shanghai, 200092 \\
Email:jczhao@tongji.edu.cn\\}

\maketitle
\footnote{This work is supported by the National Natural Science Foundation of China (Grant No. 11971354).}

\begin{abstract}
{\large } The Sparse Kaczmarz method is a famous and widely used iterative method for solving the regularized basis pursuit problem. A general scheme of the surrogate hyperplane sparse Kaczmarz method is proposed. In particular, a class of residual-based surrogate hyperplane sparse Kaczmarz method is introduced and the implementations are well discussed. Their convergence theories are proved and the linear convergence rates are studied and compared in details. Numerical experiments verify the efficiency of the proposed methods.
\end{abstract}

\bigskip
\noindent{\bf Keywords.} sparse Kaczmarz method, surrogate hyperplane, sparse solutions, convergence

\section{Introduction}
\quad Consider solving the following regularized basis pursuit problem: 
\begin{equation}
	\label{equ1}
	\min_{x\in \mathbb{R}^n} \lambda\|x\|_1+\frac{1}{2}\|x\|_2^2, \ \ \text{s.t.} \ \ Ax=b,
\end{equation}
where $A\in\mathbb{R}^{m\times n }$, $b\in\mathbb{R}^m$ and $\lambda>0$, which arises in various fields of scientific computing, including compressed sensing \cite{donoho2006compressed}, image processing \cite{candes2006robust} and machine learning \cite{byrd2012sample}. 

The Kaczmarz method is a classical and efficient projection iteration method for solving consistent linear equations and its randomized version has attracted much more attention \cite{strohmer2009randomized,zeng2023adaptive,han2024randomized}. Several representative randomized Kaczmarz methods and their extensions were summarized by Bai and Wu, and their asymptotic convergence theories were proved in \cite{bai2023randomized}. By theoretically analyzing and numerically experimenting several criteria adopted for selecting the working row, sharper upper bounds for the convergence rates of Kaczmarz-type methods was given in \cite{bai2023convergence}. Furthermore, it was verified that sampling without replacement and using quasirandom numbers are the fastest techniques for solving consistent systems \cite{ferreira2024survey}. Recently, the Kaczmarz-type methods have been generalized to solve the regularized basis pursuit problem \eqref{equ1} and its convergence theory was established in \cite{lorenz2014sparse, lorenz2014linearized}.

The randomized sparse Kaczmarz method was introduced by Stefania Petra and its sublinear convergence rate was analyzed in terms of the dual objective function by the relation with the coordinate gradient descent method  \cite{petra2015randomized}. However, the linear convergence rate was obtained by smoothing the primal objective function, resulting in a modification of the iteration process. Since the randomized sparse Kaczmarz method without smoothing converges linearly in expectation was proved in \cite{schopfer2019linear}, numerous variants of the randomized sparse Kaczmarz method were further proposed and studied, including online randomized sparse Kaczmarz method \cite{lei2018learning}, greedy randomized sparse Kaczmarz method \cite{wang2021}, randomized average sparse Kaczmarz method. Moreover, some acceleration techniques have also been applied to the sparse Kaczmarz methods, such as microkicking \cite{lunglmayr2017microkicking}, restarting\cite{tondji2023acceleration}, sketching \cite{yuan2022adaptively}.
The sparse Kaczmarz method was extended to solve linear inverse problems with independent noise exactly and systems of nonlinear equations, known as adaptive  Bregman-Kaczmarz method \cite{tondji2023adaptive} and nonlinear Bregman-Kaczmarz method \cite{gower2023nonlinear}, respectively. For more recent developments on the sparse Kaczmarz method, we refer to \cite{lorenz2023minimal,yun2023fast} and references therein.

In order to accelerate the convergence speed of the sparse Kaczmarz method, a surrogate hyperplane sparse Kaczmarz method is proposed. In particular, two implementations of the residual-based surrogate hyperplane sparse Kaczmarz method are introduced by making use of the residual in each iteration. The convergence theory of the proposed methods is proved and their linear convergence rates are studied in details. Numerical experiments further verified the efficiency of the new methods for the application to random matrices, SuiteSparse matrices and image restoration.

The organization of the paper is as follows. In section \ref{sec:2}, the surrogate hyperplane sparse Kaczmarz method is constructed and well studied. In section \ref{sec:3}, two implementations of residual-based surrogate hyperplane sparse Kaczmarz method are introduced and the convergence theories are established. In section \ref{sec:4}, numerical experiments demonstrate the effectiveness of the new methods. Finally, some  conclusions are drawn in section \ref{sec:5}.

\section{The surrogate hyperplane sparse Kaczmarz method}
\label{sec:2}
\quad In this section, after giving concepts and properties of convex functions, a surrogate hyperplane sparse Kaczmarz method is constructed. It is proved that the Bregman distance of the iteration sequences is momotonically non-increasing.

Let $\operatorname{supp}(\hat{x})=\left\{j \in\{1, \ldots, n\} \mid \hat{x}_{j} \neq 0\right\}$, where $\hat{x}$ is the unique solution of the regularized basis pursuit problem (\ref{equ1}). Denote $A_J$ as the matrix that is formed by the columns of $A$ indexed by $J$, define 
\begin{equation}
	\label{equ8}
	\tilde{\sigma}_{\min }(A)=\min \left\{\sigma_{\min }\left(A_{J}\right) \mid J \subset\{1, \ldots, n\}, A_{J} \neq 0\right\}.
\end{equation}
Assuming that $b\neq 0$, then $\hat{x}\neq 0$ and hence
\begin{equation}
	\label{equ9}
	|\hat{x}|_{\min }=\min \left\{\left|\hat{x}_{j}\right| \mid j \in \operatorname{supp}(\hat{x})\right\}>0.
\end{equation}

The conjugate function of $f: \mathbb{R}^{n} \rightarrow \mathbb{R}$ at $y\in\mathbb{R}^n$ is defined as
$$
f^{*}(y):=\sup _{x \in \mathbb{R}^{n}}\{\langle y, x\rangle-f(x)\}.
$$
Since $f$ is assumed to be finite everywhere, it is also continuous. Denote $\partial f(x)$ as the subdifferential of $f$ at $x$,
$$
\partial f(x)=\left\{x^{*} \in \mathbb{R}^{n} \mid f(y) \geq f(x)+\left\langle x^{*}, y-x\right\rangle \right\},
$$
which is nonempty, compact and convex. If $f$ is differentiable, denote the $\nabla f(x)$ is the gradient of $f(x)$, then $\partial f(x)= \{\nabla f(x)\}$.

The convex function $f$ is called $\alpha$-strongly convex, if there is some $\alpha>0$ such that for all $x,y$ and $x^*\in \partial f(x)$, it is satisfied that
$$
f(y) \geq f(x)+\left\langle x^{*}, y-x\right\rangle+\frac{\alpha}{2} \cdot\|y-x\|_{2}^{2}.
$$

If $f$ is $\alpha$-strongly convex, then the conjugate function $f^*$ is differentiable with $1/\alpha$-Lipschitz-continuous gradient, i.e.
$$
\left\|\nabla f^{*}\left(x^{*}\right)-\nabla f^{*}\left(y^{*}\right)\right\|_{2} \leq \frac{1}{\alpha} \cdot\left\|x^{*}-y^{*}\right\|_{2},
$$
which implies that 
$$
f^{*}\left(y^{*}\right) \leq f^{*}\left(x^{*}\right)+\left\langle\nabla f^{*}\left(x^{*}\right), y^{*}-x^{*}\right\rangle+\frac{1}{2 \alpha}\left\|x^{*}-y^{*}\right\|_{2}^{2}.
$$

The Bregman distance $D_f^{x^*}(x,y)$ between $x,y$ with respect to $f$ and a subgradient $x^*\in \partial f(x)$ is defined as 
	$$
	D_{f}^{x^{*}}(x, y):=f(y)-f(x)-\left\langle x^{*}, y-x\right\rangle.
	$$
   From the Proposition 11.3 in \cite{Rockafellar1998VariationalA}, when $x^*\in \partial f(x)$, it is known that $f^*(x^*)=\langle x^*,x\rangle-f(x)$, therefore
	$$
	D_{f}^{x^{*}}(x, y)=f^{*}\left(x^{*}\right)-\left\langle x^{*}, y\right\rangle+f(y).
	$$
According to the concepts and properties of $\alpha$-strongly convex function, the following two lemmas provide the relationship among Bregman distance, Euclidean distance and residual.

\begin{lemma}\cite{lorenz2014linearized}
	\label{lem1}
	Let $f$ be continuous and strongly convex with constant $\alpha>0$. For all $x,y\in \mathbb{R}^{n}$ and $x^*\in \partial f(x)$, it is satisfied that 
	$$
	D_f^{x^{*}}(x, y) \geq \frac{\alpha}{2}\|x-y\|_{2}^{2} \geq 0,
	$$
	and
	$
	D_f^{x^{*}}(x, y)=0$ if and only if  $ x=y$.
\end{lemma}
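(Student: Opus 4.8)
The plan is to prove Lemma~\ref{lem1} directly from the definition of the Bregman distance together with the defining inequality of an $\alpha$-strongly convex function. Recall that $D_f^{x^*}(x,y)=f(y)-f(x)-\langle x^*,y-x\rangle$. Since $f$ is $\alpha$-strongly convex and $x^*\in\partial f(x)$, the strong-convexity inequality quoted in the excerpt gives
\begin{equation*}
f(y)\geq f(x)+\langle x^*,y-x\rangle+\frac{\alpha}{2}\|y-x\|_2^2.
\end{equation*}
Rearranging this is exactly the statement that $D_f^{x^*}(x,y)\geq\frac{\alpha}{2}\|x-y\|_2^2$, and since $\alpha>0$ the right-hand side is nonnegative, yielding the chain of inequalities in one line. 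So the first half of the lemma is essentially immediate: it is a restatement of strong convexity in the language of Bregman distances.

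For the equivalence $D_f^{x^*}(x,y)=0 \iff x=y$, I would argue both directions. The ``$\Leftarrow$'' direction is trivial: substituting $y=x$ into the definition gives $D_f^{x^*}(x,x)=f(x)-f(x)-\langle x^*,0\rangle=0$. The ``$\Rightarrow$'' direction is the part that actually uses the first inequality: if $D_f^{x^*}(x,y)=0$, then from $D_f^{x^*}(x,y)\geq\frac{\alpha}{2}\|x-y\|_2^2\geq 0$ we force $\frac{\alpha}{2}\|x-y\|_2^2=0$, and since $\alpha>0$ this gives $\|x-y\|_2^2=0$, hence $x=y$.

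The main (and only) subtlety worth flagging is that one must be careful about which quantity is pinned to $x$ and which is free: the subgradient $x^*$ is chosen at the point $x$, and strong convexity is applied with base point $x$ and test point $y$, so the roles of $x$ and $y$ in the definition of $D_f^{x^*}(x,y)$ line up correctly with the strong-convexity inequality. Once that bookkeeping is in place, no further machinery (not even the conjugate-function reformulation of the Bregman distance given earlier) is needed. I therefore expect this proof to be short, with no genuine obstacle; the entire argument is a two-step consequence of the strong-convexity definition followed by the positivity of $\alpha$.
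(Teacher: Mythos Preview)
Your proof is correct and is exactly the standard argument: the inequality is an immediate rearrangement of the $\alpha$-strong convexity definition, and the equivalence follows from nonnegativity of $\frac{\alpha}{2}\|x-y\|_2^2$ together with the trivial case $y=x$. The paper itself does not supply a proof of this lemma---it is stated as a citation to \cite{lorenz2014linearized}---so there is no in-paper argument to compare against; your write-up is precisely what one would expect to find in the cited reference.
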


Since $f(x)=\lambda\|x\|_1+\frac{1}{2}\|x\|_2^2$ is 1-strongly convex, it follows that
\begin{equation}
	\label{equ7}
	\frac{1}{2}\|x-y\|_{2}^{2}\leq D_f^{x^{*}}(x, y) .
\end{equation}

\begin{lemma}\cite{schopfer2019linear}
	\label{lem2}
	Let $\tilde{\sigma}_{\min }(A) \text { and }|\hat{x}|_{\min }$ be defined in $\eqref{equ8}$ and $\eqref{equ9}$, respectively. Then, for all $x\in\mathbb{R}^{n}$ with $\partial f(x) \cap \mathcal{R}\left(A^{T}\right) \neq \emptyset \text { and all } x^{*}=A^{T} y \in \partial f(x) \cap \mathcal{R}\left(A^{T}\right)$, it holds that
	\begin{equation}
		\label{equ10}
		D_{f}^{x^{*}}(x, \hat{x}) \leq \nu \|A x-b\|_{2}^{2},
	\end{equation}
	where $\nu=\frac{1}{\tilde{\sigma}_{\min }^{2}(A)} \cdot \frac{|\hat{x}|_{\min }+2 \lambda}{|\hat{x}|_{\min }} $, $\mathcal{R}\left(A^{T}\right)$ is the range of $A^T$ and $\tilde{\sigma}_{\min}(A)$ denotes the smallest nonzero singular value.
\end{lemma}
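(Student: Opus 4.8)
The plan is to bound $D_f^{x^*}(x,\hat x)$ by a linear functional of $x-\hat x$ and then convert that functional into the residual $\|Ax-b\|_2=\|A(x-\hat x)\|_2$. First I would invoke optimality: since $\hat x$ solves \eqref{equ1}, the KKT conditions supply a multiplier $\hat y$ with $\hat x^{*}:=A^{T}\hat y\in\partial f(\hat x)\cap\mathcal R(A^{T})$, i.e. the hypothesis of the lemma holds at the optimum as well. Adding the two Bregman distances gives the three-point identity
$$
D_f^{x^{*}}(x,\hat x)+D_f^{\hat x^{*}}(\hat x,x)=\langle x^{*}-\hat x^{*},\,x-\hat x\rangle ,
$$
and since $D_f^{\hat x^{*}}(\hat x,x)\ge 0$ by Lemma \ref{lem1}, I obtain the basic estimate $D_f^{x^{*}}(x,\hat x)\le\langle x^{*}-\hat x^{*},x-\hat x\rangle$. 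The role of the range hypothesis is precisely here: because $x^{*},\hat x^{*}\in\mathcal R(A^{T})=(\ker A)^{\perp}$, the right-hand side is blind to any component of $x-\hat x$ lying in $\ker A$, which is exactly what rules out the degenerate case $0\ne x-\hat x\in\ker A$ (that would otherwise falsify the claim) and what will let me pass to the residual later.

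Next I would extract the two numerical factors of $\nu$ from the explicit form $f=\lambda\|\cdot\|_1+\tfrac12\|\cdot\|_2^{2}$. Writing $x^{*}=x+\lambda s$ with $s\in\partial\|x\|_1$ (and similarly $\hat x^{*}=\hat x+\lambda\hat s$), one has the clean decomposition $D_f^{x^{*}}(x,\hat x)=\tfrac12\|x-\hat x\|_2^{2}+\lambda D_{\|\cdot\|_1}^{s}(x,\hat x)$. The key observation for the $\ell_1$ term is that its $j$-th summand equals $|\hat x_j|\bigl(1-s_j\operatorname{sign}(\hat x_j)\bigr)$, so it vanishes unless $j\in\operatorname{supp}(\hat x)$ and the sign of $x_j$ disagrees with that of $\hat x_j$; at every surviving index this forces $x_j$ to vanish or to flip sign, whence $|x_j-\hat x_j|\ge|\hat x_j|\ge|\hat x|_{\min}$. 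Bounding each surviving summand crudely by $2|\hat x_j|\le\frac{2}{|\hat x|_{\min}}|x_j-\hat x_j|^{2}$ and summing yields $\lambda D_{\|\cdot\|_1}^{s}(x,\hat x)\le\frac{2\lambda}{|\hat x|_{\min}}\|x-\hat x\|_2^{2}$, and therefore an estimate of the shape $D_f^{x^{*}}(x,\hat x)\le\frac{|\hat x|_{\min}+2\lambda}{|\hat x|_{\min}}\|x-\hat x\|_2^{2}$, which already carries the scalar factor of $\nu$.

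The remaining task, and the step I expect to be the main obstacle, is the linear-algebraic conversion from the Euclidean contribution to $\|A(x-\hat x)\|_2^{2}$ through a restricted-injectivity inequality involving the submatrix singular value $\tilde\sigma_{\min}(A)$ of \eqref{equ8}. The difficulty is that the naive bound $\|x-\hat x\|_2\le\tfrac1{\tilde\sigma_{\min}(A)}\|A(x-\hat x)\|_2$ can fail, since $x-\hat x$ may carry a genuine $\ker A$-component; the honest route is therefore to keep the three-point form $\langle x^{*}-\hat x^{*},x-\hat x\rangle$ (which, by the range hypothesis, sees only the part of $x-\hat x$ meeting $A$ injectively) and to pass to the residual there. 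The delicate points are to legitimately discard the kernel directions using $x^{*}-\hat x^{*}\in\mathcal R(A^{T})$, and to justify the \emph{submatrix} constant $\tilde\sigma_{\min}(A)$ rather than merely the smallest nonzero singular value of $A$ by restricting attention to the active columns indexed by $\operatorname{supp}(\hat x)$ and arguing $\sigma_{\min}(A_J)\ge\tilde\sigma_{\min}(A)$ on a suitable $A_J\ne 0$ — all while keeping the sign/soft-thresholding bookkeeping of the previous paragraph consistent so that nothing off $\operatorname{supp}(\hat x)$ is double counted. Once this restricted-injectivity estimate is in place, multiplying the two bounds produces $D_f^{x^{*}}(x,\hat x)\le\nu\|Ax-b\|_2^{2}$ with $\nu=\frac{1}{\tilde\sigma_{\min}^{2}(A)}\cdot\frac{|\hat x|_{\min}+2\lambda}{|\hat x|_{\min}}$, as claimed.
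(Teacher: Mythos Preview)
The paper does not actually prove Lemma~\ref{lem2}; it is quoted verbatim from \cite{schopfer2019linear} and used as a black box in the proofs of Theorems~\ref{thm1} and~\ref{thm2}. There is therefore no proof in this paper to compare your proposal against.

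That said, your sketch contains the right structural pieces---the three-point identity, the decomposition $D_f^{x^*}(x,\hat x)=\tfrac12\|x-\hat x\|_2^2+\lambda D_{\|\cdot\|_1}^{s}(x,\hat x)$, and the sign-mismatch argument yielding the factor $\frac{|\hat x|_{\min}+2\lambda}{|\hat x|_{\min}}$---but the final assembly is not coherent as written. You derive two \emph{separate} inequalities, $D_f^{x^*}(x,\hat x)\le\langle x^*-\hat x^*,x-\hat x\rangle$ and $D_f^{x^*}(x,\hat x)\le\frac{|\hat x|_{\min}+2\lambda}{|\hat x|_{\min}}\|x-\hat x\|_2^2$, and then speak of ``multiplying the two bounds''; but these are two upper bounds on the same quantity and there is no legitimate way to multiply them into the stated conclusion. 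The route you gesture at---returning to the three-point form to exploit $x^*-\hat x^*\in\mathcal R(A^T)$---discards the scalar factor you just extracted, while the route through $\|x-\hat x\|_2^2$ runs into exactly the kernel obstruction you yourself flag.

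The step you label ``the main obstacle'' is in fact the entire technical content of the lemma, and you leave it open. The support set $J=\operatorname{supp}(x)\cup\operatorname{supp}(\hat x)$ on which $x-\hat x$ lives need not give an injective $A_J$, so the inequality $\|x-\hat x\|_2\le\tilde\sigma_{\min}(A)^{-1}\|A(x-\hat x)\|_2$ genuinely can fail, and the argument in \cite{schopfer2019linear} requires a more delicate case analysis (separating indices where the soft-threshold is active from those where it is not, and using the range condition $x^*=A^Ty$ coordinatewise) to produce the submatrix constant $\tilde\sigma_{\min}(A)$. Your proposal correctly locates the difficulty but does not resolve it.
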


Denote $a_i^T$ is the $i$th row of $A$ and $b_i$ is the $i$th element of $b$. The Kaczmarz method project the current iteration onto the hyperplane 
	$$a_i^Tx=b_i,$$
where the row index $i$ is chosen cyclically or randomly. The surrogate hyperplane Kaczmarz method calculate the weight vector $\eta_k\in \mathbb{R}^m$ to linearly combine multiple hyperplanes of linear equations for projection \cite{wang2023surrogate}
$$
\eta_k^TAx=\eta_k^Tb.
$$

The sparse Kaczmarz method for solving the regularized basis pursuit problem \eqref{equ1} performs the update in each iteration by selecting a row index $i$,
$$
\begin{array}{l}x_{k+1}^{*}=x_{k}^{*}+ \frac{\left(b_i-a_{i}^Tx_k\right)}{\left\|a_{i}\right\|_{2}^{2}} a_{i}, \\ x_{k+1}=S_{\lambda}\left(x_{k+1}^{*}\right).\end{array}
$$
where the soft shrinkage function $S_{\lambda}(x)=\text{sign}(x)\max(|x|-\lambda,0)$, $\text{sign}(x)$ and $|x|$ represent the sign function and the absolute value of vector $x$ respectively.

Inspired by the sparse Kaczmarz mehtod, the surrogate hyperplane sparse Kaczmarz method for solving the regularized basis pursuit problem \eqref{equ1} is constructed in Algorithm \ref{agm1}.
\begin{algorithm}[!htbp]
	\begin{algorithmic}[1]
		\caption{The surrogate hyperplane sparse Kaczmarz method}
		\label{agm1}
		\State \textbf{Input:} $x_{0}=x_{0}^*=0$, $A$, $b$ and $\lambda>0$.
		\For{$k=0,1,2,\ldots$}
		\State Determine the weight vector $\eta_k$ to generate the surrogate hyperplane.
		\State Compute
		$$
		x_{k+1}^*=x_{k}^*+\frac{\eta_k^T(b-Ax_k)}{\left\|A^T\eta_k\right\|_{2}^{2}}A^T\eta_k.
		$$
		\State Compute
		$$
		x_{k+1}=S_{\lambda}(x^*_{k+1}).
		$$
		\EndFor
	\end{algorithmic}
\end{algorithm}

In particular, if $\eta_{k}$ is a Gaussian vector that satisfis the normal distribution, the surrogate hyperpalne sparse Kaczmarz method is an improvement of the Gaussian Kaczmarz method \cite{gower2015randomized} for solving the regularized basis pursuit problem (\ref{equ1}). 

If $\eta_k=e_i$, where $e_i$ is the $i$th column of the identity matrix, the surrogate hyperplane sparse Kaczmarz method is the original sparse Kaczmarz method \cite{lorenz2014sparse}. In \cite{schopfer2019linear}, it was proved that the randomized sparse Kaczmarz method converge in expection to the unique solution $\hat{x}$ of the regularized Basis Pursuit problem (\ref{equ1}) with a linear rate, it holds that 
$$
\mathbb{E}\left[D_{f}^{x_{k+1}^{*}}\left(x_{k+1}, \hat{x}\right)\right] \leq (1-\hat{q}) \cdot \mathbb{E}\left[D_{f}^{x_{k}^{*}}\left(x_{k}, \hat{x}\right)\right]
$$
where $\hat{q}=\frac{1}{2\nu \|A\|_F^2}$, $\nu=\frac{1}{\tilde{\sigma}_{\min }^{2}(A)} \cdot \frac{|\hat{x}|_{\min }+2 \lambda}{|\hat{x}|_{\min }}$, and the expection is taken with respect to the probability distribution $p_i=\left\|a_{i}\right\|_{2}^{2} /\|A\|_{F}^{2}$.

In the case where the weight vector $\eta_{k}$ is undetermined, the relationship between the Bregman distance of the current point to the true solution $\hat{x}$ and that of the next iteration is presented in the following lemma.

\begin{lemma}
	\label{thm0}
	Given an initial vector $x_0=x_0^*=0$, assuming that $b\neq 0$ is in the range $\mathcal{R}(A)$ of $A$. Then, the Bregman distance between the iteration sequence $\{x_k\}_{k=0}^{\infty}$ of the Algorithm $\ref{agm1}$ and the unique solution $\hat{x}$ of the regularized basis pursuit problem $\eqref{equ1}$ satisfies that 
	$$
		D_f^{x_{k+1}^*}(x_{k+1},\hat{x})\leq D_{f}^{x_k^*}(x_k,\hat{x})-\frac{1}{2}\cdot \frac{(\eta_{k}^T(b-Ax_k))^2}{\|A^T\eta_{k}\|_2^2}.
	$$
\end{lemma}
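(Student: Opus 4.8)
The plan is to work entirely with the conjugate form of the Bregman distance recalled in the excerpt, namely $D_f^{x^*}(x,\hat{x}) = f^*(x^*) - \langle x^*, \hat{x}\rangle + f(\hat{x})$, so that the two Bregman distances in the claim differ only through their starred arguments $x_k^*$ and $x_{k+1}^*$. Subtracting, the feasibility-independent term $f(\hat{x})$ cancels and I am left with
$$
D_f^{x_{k+1}^*}(x_{k+1},\hat{x}) - D_f^{x_k^*}(x_k,\hat{x}) = f^*(x_{k+1}^*) - f^*(x_k^*) - \langle x_{k+1}^* - x_k^*, \hat{x}\rangle,
$$
which reduces the lemma to estimating the increment of $f^*$ along the step.

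The first key ingredient is the identity $\nabla f^*(x_k^*) = x_k$. Since $f(x) = \lambda\|x\|_1 + \tfrac{1}{2}\|x\|_2^2$ is $1$-strongly convex, its conjugate is differentiable with gradient $\nabla f^* = S_{\lambda}$; hence the shrinkage step $x_{k+1} = S_{\lambda}(x_{k+1}^*)$ in Algorithm \ref{agm1} is exactly $x_{k+1} = \nabla f^*(x_{k+1}^*)$, equivalently $x_k^* \in \partial f(x_k)$ for every $k$ (including $k=0$, where $x_0 = x_0^* = 0 = S_{\lambda}(0)$). Applying the descent inequality for the $1$-Lipschitz-continuous gradient of $f^*$ stated in the excerpt with $\alpha = 1$ gives
$$
f^*(x_{k+1}^*) \leq f^*(x_k^*) + \langle x_k, x_{k+1}^* - x_k^*\rangle + \tfrac{1}{2}\|x_{k+1}^* - x_k^*\|_2^2,
$$
so that the difference above is bounded by $\langle x_{k+1}^* - x_k^*,\, x_k - \hat{x}\rangle + \tfrac{1}{2}\|x_{k+1}^* - x_k^*\|_2^2$.

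It then remains to substitute the update $x_{k+1}^* - x_k^* = t_k\, A^T\eta_k$ with step size $t_k = \eta_k^T(b - Ax_k)/\|A^T\eta_k\|_2^2$ and simplify. The second key ingredient is feasibility $A\hat{x} = b$, which turns the inner product in the linear term into a residual: $\langle A^T\eta_k,\, x_k - \hat{x}\rangle = \langle \eta_k,\, Ax_k - b\rangle = -\eta_k^T(b - Ax_k)$, so the linear term equals $-t_k\,\eta_k^T(b - Ax_k) = -(\eta_k^T(b-Ax_k))^2/\|A^T\eta_k\|_2^2$, while the quadratic term contributes $\tfrac{1}{2}t_k^2\|A^T\eta_k\|_2^2 = \tfrac{1}{2}(\eta_k^T(b-Ax_k))^2/\|A^T\eta_k\|_2^2$. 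Adding these reproduces exactly $-\tfrac{1}{2}(\eta_k^T(b-Ax_k))^2/\|A^T\eta_k\|_2^2$, the claimed bound.

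I do not expect a genuine obstacle in the algebra; the one step to handle with care is the identification $\nabla f^*(x_k^*) = x_k$, i.e. that the soft shrinkage $S_{\lambda}$ is precisely the gradient of the conjugate of $f$ and that consequently $x_k^*$ is a legitimate subgradient at $x_k$ for which the conjugate form of the Bregman distance applies. Once this is established, the particular choice of $t_k$ is exactly what forces the linear and quadratic contributions to combine into a single nonpositive quantity, mirroring the classical sparse Kaczmarz estimate with the single row $a_i$ replaced by the aggregated direction $A^T\eta_k$.
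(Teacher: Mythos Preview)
Your proposal is correct and follows essentially the same route as the paper: express both Bregman distances via the conjugate identity $D_f^{x^*}(x,\hat{x})=f^*(x^*)-\langle x^*,\hat{x}\rangle+f(\hat{x})$, apply the descent lemma for the $1$-Lipschitz gradient $\nabla f^*$ together with $\nabla f^*(x_k^*)=x_k$, and then substitute the update $x_{k+1}^*-x_k^*=t_k A^T\eta_k$ and use $A\hat{x}=b$ to combine the linear and quadratic terms. If anything, your write-up is slightly more explicit than the paper's in justifying $\nabla f^*(x_k^*)=S_\lambda(x_k^*)=x_k$, which the paper uses without comment.
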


\begin{proof}
	From the definition for the Bregman distance and $\nabla f^*$ is 1-Lipschitz continuous, it follows that 
	$$
		\begin{aligned} D_f^{x_{k+1}^*}(x_{k+1},\hat{x})& =f^{*}(x_{k+1}^*)-\langle x_{k+1}^*,\hat{x}\rangle+f(\hat{x})\\ & \leq f^*(x_k^*)+\langle \nabla f^*(x_k^*),x_{k+1}^*-x_k^*\rangle +\frac{1}{2} \|x_{k+1}^*-x_{k}^*\|_2^2+f(\hat{x})-\langle x_{k+1}^*,\hat{x}\rangle \\
			& = D_{f}^{x_k^*}(x_k,\hat{x})+\langle x_{k+1}^*-x_k^*,x_k-\hat{x} \rangle +\frac{1}{2}\|x_{k+1}^*-x_k^*\|_2^2.\end{aligned}
	$$
	It is easily seen that $x^*_{k+1}-x^*_{k}=\frac{\eta_k^T(b-Ax_k)}{\left\|A^T\eta_k\right\|_{2}^{2}}A^T\eta_k$ in the Algorithm $\ref{agm1}$, so
	$$
	\begin{aligned}
		D_f^{x_{k+1}^*}(x_{k+1},\hat{x})&\leq D_{f}^{x_k^*}(x_k,\hat{x})+\left\langle \frac{\eta_k^T(b-Ax_k)}{\left\|A^T\eta_k\right\|_{2}^{2}}A^T\eta_k,x_k-\hat{x} \right\rangle +\frac{1}{2}\|\frac{\eta_k^T(b-Ax_k)}{\left\|A^T\eta_k\right\|_{2}^{2}}A^T\eta_k\|_2^2\\
		&=D_{f}^{x_k^*}(x_k,\hat{x})+\left\langle \frac{\eta_k^T(b-Ax_k)}{\left\|A^T\eta_k\right\|_{2}^{2}}\eta_k,Ax_k-b \right\rangle+\frac{1}{2}\|\frac{\eta_k^T(b-Ax_k)}{\left\|A^T\eta_k\right\|_{2}^{2}}A^T\eta_k\|_2^2\\
		&=D_{f}^{x_k^*}(x_k,\hat{x})-\frac{1}{2}\cdot \frac{(\eta_{k}^T(b-Ax_k))^2}{\|A^T\eta_{k}\|_2^2}.
	\end{aligned}
	$$
\end{proof}

It is observed from Lemma $\ref{thm0}$ that the last term $\frac{(\eta_{k}^T(b-Ax_k))^2}{\|A^T\eta_{k}\|_2^2}$ is nonnegative, so the Bregman distance between $x_k$ and $\hat{x}$ is monotonically non-increasing. In the following section, two specific implementations for computing $\eta_{k}$ are provided, and the convergence theorems for the corresponding methods are demonstrated by making use of Lemma $\ref{thm0}$.

\section{The Residual-based Surrogate hyperplane sparse Kaczmarz method }
\label{sec:3}
\quad In this section, by making use of the residuals to construct the weight vector $\eta_{k}$, the residual-based surrogate hyperplane Kaczmarz method and the partial residual-based surrogate hyperplane Kaczmarz method are proposed respectively. The convergence theories of these methods are well studied.

The residual-based surrogate hyperplane sparse Kaczmarz method is described as follows. 
\begin{algorithm}[H]
	\begin{algorithmic}[1]
		\caption{The residual-based surrogate hyperplane sparse Kaczmarz method}
		\label{agm2}
		\State \textbf{Input:} $x_{0}=x_{0}^*=0$, $A$, $b$ and $\lambda>0$.
        \For{$k=0,1,2,\ldots$}
		\State Compute the weight vector $\eta_k=b-Ax_k$.
		\State Compute
		$$
		x_{k+1}^*=x_{k}^*+\frac{\eta_k^T\eta_{k}}{\left\|A^T\eta_k\right\|_{2}^{2}}A^T\eta_k.
		$$
		\State Compute
		$$
		x_{k+1}=S_{\lambda}(x_{k+1}^*).
		$$
		\EndFor
	\end{algorithmic}
\end{algorithm}

 Due to the soft shrinkage function $S_{\lambda}(x)$, the residual-based surrogate hyperplane sparse Kaczmarz method is able to solve the sparse solution of the regularized basis pursuit problem \eqref{equ1}.

\begin{theorem}
	\label{thm1}
	 Given an initial vector $x_0=x_0^*=0$, assuming that $b\neq 0$ is in the range $\mathcal{R}(A)$ of $A$. The iteration sequence $\{x_k\}_{k=0}^{\infty}$ of the Algorithm $\ref{agm2}$ converges to the unique solution $\hat{x}$ of the regularized basis pursuit problem $\eqref{equ1}$, it holds that
	$$
	D_f^{x_{k+1}^*}(x_{k+1},\hat{x})\leq \left(1-q\right)D_{f}^{x_k^*}(x_k,\hat{x}),
	$$
	and
	$$
	\|x_k-\hat{x}\|_2\leq \left(1-q\right)^{\frac{k}{2}}\sqrt{\left(2\lambda \|\hat{x}\|_1+\frac{1}{2}\|\hat{x}\|_2^2\right)}.
	$$
	where $q =\frac{1}{2\nu \sigma^2_{\max}(A)}$, $\nu=\frac{1}{\tilde{\sigma}_{\min }^{2}(A)} \cdot \frac{|\hat{x}|_{\min }+2 \lambda}{|\hat{x}|_{\min }} $, and $\sigma_{\max}(A)$ denotes the maximum singular value of $A$.
\end{theorem}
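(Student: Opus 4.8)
The plan is to specialize the one-step estimate of Lemma~\ref{thm0} to the particular weight vector $\eta_k = b - Ax_k$ of Algorithm~\ref{agm2}, and then convert the resulting descent term into a fixed fraction of the current Bregman distance by means of Lemma~\ref{lem2}. Before doing so, I would first record two structural facts by induction on $k$. Since $x_0^* = 0 \in \mathcal{R}(A^T)$ and every update adds the vector $\tfrac{\eta_k^T\eta_k}{\|A^T\eta_k\|_2^2}A^T\eta_k \in \mathcal{R}(A^T)$, each iterate satisfies $x_k^* \in \mathcal{R}(A^T)$; and because $x_k = S_\lambda(x_k^*) = \nabla f^*(x_k^*)$, the conjugate--subgradient relation gives $x_k^* \in \partial f(x_k)$. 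Hence $x_k^* \in \partial f(x_k) \cap \mathcal{R}(A^T)$, which is exactly the hypothesis needed to apply Lemma~\ref{lem2}. I would also note that $\eta_k = b - Ax_k \in \mathcal{R}(A)$, so $A^T\eta_k = 0$ would force $\eta_k \in \mathcal{R}(A)\cap\mathcal{N}(A^T) = \{0\}$; thus the iteration is well defined as long as $x_k \neq \hat{x}$.

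For the first inequality, substituting $\eta_k = b - Ax_k$ into Lemma~\ref{thm0} and using $\eta_k^T(b - Ax_k) = \|b - Ax_k\|_2^2 = \|\eta_k\|_2^2$ turns the descent term into $\tfrac12 \|\eta_k\|_2^4 / \|A^T\eta_k\|_2^2$. Bounding $\|A^T\eta_k\|_2^2 \le \sigma_{\max}^2(A)\|\eta_k\|_2^2$ from above lower-bounds this term by $\|b-Ax_k\|_2^2 / (2\sigma_{\max}^2(A))$. Now invoking Lemma~\ref{lem2} in the form $\|Ax_k - b\|_2^2 \ge \nu^{-1} D_f^{x_k^*}(x_k,\hat{x})$ replaces the residual by the Bregman distance, so the descent term is at least $q\, D_f^{x_k^*}(x_k,\hat{x})$ with $q = 1/(2\nu\sigma_{\max}^2(A))$, yielding the claimed contraction $D_f^{x_{k+1}^*}(x_{k+1},\hat{x}) \le (1-q)D_f^{x_k^*}(x_k,\hat{x})$.

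The Euclidean estimate then follows by iterating this contraction to get $D_f^{x_k^*}(x_k,\hat{x}) \le (1-q)^k D_f^{x_0^*}(x_0,\hat{x})$, evaluating the initial term via $x_0 = x_0^* = 0$ and $f(0) = 0$ as $D_f^{x_0^*}(x_0,\hat{x}) = f(\hat{x}) = \lambda\|\hat{x}\|_1 + \tfrac12\|\hat{x}\|_2^2$, and finally passing from the Bregman distance to the norm through the $1$-strong convexity bound \eqref{equ7}, i.e. $\tfrac12\|x_k - \hat{x}\|_2^2 \le D_f^{x_k^*}(x_k,\hat{x})$. Since $\nu\sigma_{\max}^2(A) \ge (\sigma_{\max}^2(A)/\tilde\sigma_{\min}^2(A))\cdot(|\hat{x}|_{\min}+2\lambda)/|\hat{x}|_{\min} > 1$, one has $0 < q < \tfrac12$, so $(1-q)^k \to 0$ and convergence follows.

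The main obstacle I anticipate is not the algebra of the descent step but the careful verification that the hypotheses of Lemma~\ref{lem2} genuinely hold at every iterate, i.e. that the pair $(x_k, x_k^*)$ stays in $\partial f(x_k)\cap\mathcal{R}(A^T)$; this is precisely what licenses replacing the residual by the Bregman distance and ultimately produces the linear rate. A secondary bookkeeping point is the exact constant under the square root: telescoping together with \eqref{equ7} produces $\|x_k-\hat{x}\|_2^2 \le (1-q)^k\big(2\lambda\|\hat{x}\|_1 + \|\hat{x}\|_2^2\big)$, and I would take care to reconcile this with the stated coefficient $\tfrac12$ on $\|\hat{x}\|_2^2$.
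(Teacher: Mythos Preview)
Your proposal is correct and follows essentially the same route as the paper: specialize Lemma~\ref{thm0} to $\eta_k=b-Ax_k$, bound $\|A^T\eta_k\|_2^2\le\sigma_{\max}^2(A)\|\eta_k\|_2^2$, and invoke Lemma~\ref{lem2} to obtain the contraction, then telescope and apply \eqref{equ7}. You are in fact more careful than the paper in two respects: you explicitly verify by induction that $x_k^*\in\partial f(x_k)\cap\mathcal{R}(A^T)$ so that Lemma~\ref{lem2} applies (the paper uses this silently), and your bookkeeping on the final constant is correct---the telescoping plus \eqref{equ7} indeed gives $\sqrt{2\lambda\|\hat x\|_1+\|\hat x\|_2^2}$, which is what appears in the analogous Theorem~\ref{thm2}, so the $\tfrac12$ on $\|\hat x\|_2^2$ in the present statement is a typo rather than something to reconcile.
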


\begin{proof}
	
	Since $\eta_k=b-Ax_k$ in the Algorithm \ref{agm2}, according to Lemma $\ref{lem2}$ and Lemma $\ref{thm0}$, it is obtained that
	$$
	\begin{aligned}
		D_f^{x_{k+1}^*}(x_{k+1},\hat{x})&\leq D_{f}^{x_k^*}(x_k,\hat{x})-\frac{1}{2}\cdot \frac{\|\eta_{k}\|^2_2\cdot \|\eta_{k}\|^2_2}{\|A^T\eta_{k}\|_2^2}\\
		& \leq D_{f}^{x_k^*}(x_k,\hat{x})-\frac{1}{2}\cdot \frac{\|b-Ax_k\|_2^2}{\sigma_{\max}^2(A)}\\
		& \leq \left(1-\frac{1}{2\nu \sigma_{\max}^2(A)}\right)D_{f}^{x_k^*}(x_k,\hat{x}).
	\end{aligned}
	$$
	Denote
	$$
	q=\frac{1}{2\nu \sigma_{\max}^2(A)},
	$$
	So
	$$
	D_f^{x_{k+1}^*}(x_{k+1},\hat{x})\leq \left(1-q\right)D_{f}^{x_k^*}(x_k,\hat{x}).
	$$
    which is concluded that by starting with the initial vector $x_0=x_0^*=0$,
	$$
	\|x_k-\hat{x}\|_2\leq \left(1-q\right)^{\frac{k}{2}}\sqrt{\left(2\lambda \|\hat{x}\|_1+\frac{1}{2}\|\hat{x}\|_2^2\right)}.
	$$
	 Therefore, if $b\neq 0$, then $\hat{x}\neq 0$, it is easy to know that 
	$$
	0<q=\frac{1}{2}\cdot\frac{\tilde{\sigma}_{\min}^2(A)}{\sigma_{\max}^2(A)} \cdot \frac{|\hat{x}|_{\min}}{|\hat{x}|_{\min}+2\lambda}\leq \frac{1}{2}<1
	$$
\end{proof}

It is easily found that the contraction factor $q$ of the residual-based surrogate hyperplane Kaczmarz method is a constant, so the iteration sequence $\{x_k\}_{k=0}^{\infty}$ converge to the unique solution $\hat{x}$ of the regularized basis pursuit problem ($\ref{equ1}$) linearly. 

Moreover, it is easily seen that the contraction factor of the residual-based surrogate hyperplane sparse Kaczmarz method is $q =\frac{1}{2\nu \sigma^2_{\max}(A)}$, while that of the randomized sparse Kaczmarz method is $\hat{q}=\frac{1}{2\nu \|A\|_F^2}$, it is obvious that $q\geq \hat{q}$, which shows that the covergence rate of the residual-based surrogate hyperplane sparse Kaczmarz method is faster than the randomized sparse Kaczmarz method theoretically.

In particular, it is not necessary to use entire residual information because certain residuals are closed to zero after several iterations. Therefore, considering to choose larger residuals based on an adaptive strategy, the partial residual-based surrogate hyperplane sparse Kaczmarz method is proposed, which is described in Algorithm $\ref{agm3}$.
\begin{algorithm}[!htbp]
	\begin{algorithmic}[1]
		\caption{The partial residual-based surrogate hyperplane sparse Kaczmarz method}
		\label{agm3}
		\State \textbf{Input:} $x_{0}=x_{0}^*=0$, $A$, $b$, $\theta\in[0,1]$and $\lambda>0$.
		\For{$k=0,1,2,\ldots$}
		\State Compute
		$$
					\epsilon_{k}=\frac{\theta}{\left\|b-A x_{k}\right\|_{2}^{2}} \max _{1 \leq i \leq m}\left\{\frac{\left|b_{i}-a_{i}^T x_{k}\right|^{2}}{\left\|a_{i}\right\|_{2}^{2}}\right\}+\frac{1-\theta}{\|A\|_{F}^{2}}.
		$$
		\State Determine the index set of positive integers
		$$
			\tau_{k}=\left\{i|| b_{i}-\left.a_{i} ^Tx_{k}\right|^{2} \geq \epsilon_{k}\left\|b-A x_{k}\right\|_{2}^{2}\left\|a_{i}\right\|_{2}^{2}\right\}.
		$$
		\State Compute the parameter
		$$
			\eta_{k}=\sum_{i \in \tau_{k}}\left(b_{i}-a_{i}^T x_{k}\right) e_{i}.
		$$
		\State Compute
		$$
			x^*_{k+1}=x^*_{k}+\frac{\eta_k^T(b-Ax_k)}{\left\|A^T\eta_k\right\|_{2}^{2}}A^T\eta_k.
		$$
		\State Compute
		$$
			x_{k+1}=S_{\lambda}(x_{k+1}^*).
		$$
		\EndFor
	\end{algorithmic}
\end{algorithm}
	
The Algorithm $\ref{agm3}$ covers the relaxed version of the fast deterministic block Kaczmarz method \cite{chen2022fast} without the last iteration, which is used for solving the consistent linear equations, but fails to solve the regularized basis pursuit problem \eqref{equ1}. By the straightforward computations, the iteration is rewritten as: 
$$
\begin{aligned} x^*_{k+1} & =x^*_{k}+\frac{\eta_{k}^{T}\left(b-A x_{k}\right)}{\left\|A^{T} \eta_{k}\right\|_{2}^{2}}\left(\sum_{i \in \tau_{k}}\left(b_{i}-a_{i}^T x_{k}\right)a_{i}\right) \\ & =x_{k}+\frac{\left\|\eta_{k}\right\|_{2}^{2}\left\|A_{\tau_{k}}\right\|_{F}^{2}}{\left\|A^{T} \eta_{k}\right\|_{2}^{2}}\left(\sum_{i \in \tau_{k}} \frac{\left\|a_{i}\right\|_{2}^{2}}{\left\|A_{\tau_{k}}\right\|_{F}^{2}} \frac{(b_{i}-a_{i}^T x_{k})}{\left\|a_{i}\right\|_{2}^{2}}a_{i}\right),\end{aligned}
$$
where $A_{\tau_{k}}$ stands for the row submatrix of $A$ indexed by $\tau_{k}$. By choosing $t_{k}=\frac{\left\|\eta_{k}\right\|_{2}^{2}\left\|A_{\tau_{k}}\right\|_{F}^{2}}{\left\|A^{T} \eta_{k}\right\|_{2}^{2}}$ and $\omega_{i}^{k}=\frac{\left\|a_{i}\right\|_{2}^{2}}{\left\|A_{\tau_{k}}\right\|_{F}^{2}}$, the partial residual-based surrogate hyperplane sparse Kaczmarz method iterates as
$$
x^*_{k+1}=x^*_{k}+t_{k}\left(\sum_{i \in \tau_{k}} \omega_{i}^{k} \frac{(b_{i}-a_{i}^T x_{k})}{\left\|a_{i}\right\|_{2}^{2}}a_{i}\right),
$$
which is an improved parallel version of the faster randomized block sparse Kaczmarz by averaging \cite{tondji2023faster}. 

When $\theta=1$ for the Algorithm $\ref{agm3}$, only one row index with the greatest residual is selected for iteration,  
	$$
	i=\max _{1 \leq i \leq m}\left\{\frac{\left|b_{i}-a_{i}^{T} x_{k}\right|^{2}}{\left\|a_{i}\right\|_{2}^{2}}\right\},
	$$
	then $\eta_k=(b_i-a_i^Tx_k)e_i$, the iteration of the partial residual-based surrogate hyperplane sparse Kaczamarz metheod is 
	$$
	x_{k+1}^*=x_k^*+\frac{(b-a_i^Tx_k)e_i^T(b-Ax_k)}{\|A^T(b_i-a_i^Tx_k)e_i\|_2^2}A^T(b_i-a_i^Tx_k)e_i=x_k^*+\frac{b_i-a_i^Tx_k}{\|a_i\|_2^2}a_i,
	$$
	which degenerates into the original sparse Kaczmarz method. 
	
	When $\theta=0$ for the Algorithm \ref{agm3}, the row index is computed by
		$$
		\frac{\left|b_{i}-a_{i}^{T} x_{k}\right|^{2} }{\|a_i\|_2^2}\geq \frac{\|b-Ax_k\|_2^2}{\|A\|_F^2},
		$$
		Since some residuals are closed to zero after several iterations, the number of row index in $\tau_k$ is almost equal to $m$, then
		$$
		b-Ax_k \approx \sum_{i \in \tau_{k}}\left(b_{i}-a_{i}^{T} x_{k}\right) e_{i}.
		$$
		which shows that the Algorithm \ref{agm2} and \ref{agm3} have similar convergence rate.
		
		In the following theorem, the linear convergence rate of the Algorithm \ref{agm3} is analyzed in details.

\begin{theorem}
	\label{thm2}
	 Given an initial vector $x_0=x_0^*=0$, assuming that $b\neq 0$ is in the range $\mathcal{R}(A)$ of $A$. Then, the iteration sequence $\{x_k\}_{k=0}^{\infty}$ of the Algorithm $\ref{agm3}$ converges to the unique solution $\hat{x}$ of the regularized basis pursuit problem $\eqref{equ1}$, it holds that
	$$
	D_f^{x_{k+1}^*}(x_{k+1},\hat{x})\leq \left(1-q_k\right)D_{f}^{x_k^*}(x_k,\hat{x}).
	$$
	and
    $$
	\|x_k-\hat{x}\|_2\leq \prod_{j=0}^{k}\left(1-q_j\right)^{\frac{k}{2}}\sqrt{\left(2\lambda \|\hat{x}\|_1+ \|\hat{x}\|_2^2\right)}.
	$$
	where $q_k=\frac{\epsilon_{k} \|A_{\tau_{k}}\|_F^2}{ 2\cdot \nu \sigma_{\max}^2(A_{\tau_{k}})}$, $\nu=\frac{1}{\tilde{\sigma}_{\min }^{2}(A)} \cdot \frac{|\hat{x}|_{\min }+2 \lambda}{|\hat{x}|_{\min }} $, and $\sigma_{\max}(A_{\tau_{k}})$ denotes the maximum singular value of $A_{\tau_{k}}$.
\end{theorem}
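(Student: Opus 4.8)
The plan is to follow the same template as the proof of Theorem \ref{thm1}, starting from the one-step estimate of Lemma \ref{thm0} and then controlling the quantity $\frac{(\eta_k^T(b-Ax_k))^2}{\|A^T\eta_k\|_2^2}$ in terms of the Bregman distance $D_f^{x_k^*}(x_k,\hat{x})$ via Lemma \ref{lem2}. As a preliminary I would first record, by induction on $k$, that $x_k^*\in\partial f(x_k)\cap\mathcal{R}(A^T)$: the base case holds since $x_0^*=0\in\partial f(0)$ and $0\in\mathcal{R}(A^T)$, and each update in Algorithm \ref{agm3} adds a multiple of $A^T\eta_k\in\mathcal{R}(A^T)$ while preserving $x_{k+1}^*\in\partial f(x_{k+1})$ through the shrinkage step. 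This guarantees that Lemma \ref{lem2} is applicable to every iterate.

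The central computation is to evaluate the two $\eta_k$-dependent quantities. Since $\eta_k=\sum_{i\in\tau_k}(b_i-a_i^Tx_k)e_i$ is exactly the residual $b-Ax_k$ restricted to the index set $\tau_k$, the numerator simplifies to $\eta_k^T(b-Ax_k)=\sum_{i\in\tau_k}(b_i-a_i^Tx_k)^2=\|\eta_k\|_2^2$. For the denominator I would write $A^T\eta_k=\sum_{i\in\tau_k}(b_i-a_i^Tx_k)a_i=A_{\tau_k}^T(\eta_k)_{\tau_k}$ and use the operator-norm bound $\|A^T\eta_k\|_2^2\leq\sigma_{\max}^2(A_{\tau_k})\|\eta_k\|_2^2$, which combine to give $\frac{(\eta_k^T(b-Ax_k))^2}{\|A^T\eta_k\|_2^2}\geq\frac{\|\eta_k\|_2^2}{\sigma_{\max}^2(A_{\tau_k})}$. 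Next, summing the defining inequality $|b_i-a_i^Tx_k|^2\geq\epsilon_k\|b-Ax_k\|_2^2\|a_i\|_2^2$ of $\tau_k$ over $i\in\tau_k$ and recognizing $\sum_{i\in\tau_k}\|a_i\|_2^2=\|A_{\tau_k}\|_F^2$ yields the lower bound $\|\eta_k\|_2^2\geq\epsilon_k\|A_{\tau_k}\|_F^2\|b-Ax_k\|_2^2$.

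Feeding these into Lemma \ref{thm0} and then applying Lemma \ref{lem2} in the form $\|b-Ax_k\|_2^2\geq\frac{1}{\nu}D_f^{x_k^*}(x_k,\hat{x})$ produces
$$
D_f^{x_{k+1}^*}(x_{k+1},\hat{x})\leq D_f^{x_k^*}(x_k,\hat{x})-\frac{1}{2}\cdot\frac{\epsilon_k\|A_{\tau_k}\|_F^2}{\nu\,\sigma_{\max}^2(A_{\tau_k})}D_f^{x_k^*}(x_k,\hat{x})=(1-q_k)D_f^{x_k^*}(x_k,\hat{x}),
$$
which is the claimed per-step contraction with $q_k=\frac{\epsilon_k\|A_{\tau_k}\|_F^2}{2\nu\,\sigma_{\max}^2(A_{\tau_k})}$. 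Iterating this bound from $k=0$, inserting $D_f^{x_0^*}(x_0,\hat{x})=f(\hat{x})=\lambda\|\hat{x}\|_1+\frac{1}{2}\|\hat{x}\|_2^2$ (using $x_0=x_0^*=0$), and finally invoking the strong-convexity estimate \eqref{equ7} in the form $\frac{1}{2}\|x_k-\hat{x}\|_2^2\leq D_f^{x_k^*}(x_k,\hat{x})$ delivers the stated Euclidean-norm decay $\|x_k-\hat{x}\|_2\leq\big(\prod_{j=0}^{k-1}(1-q_j)\big)^{1/2}\sqrt{2\lambda\|\hat{x}\|_1+\|\hat{x}\|_2^2}$. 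To conclude genuine convergence I would separately show each $q_k$ is bounded below by a positive constant: the weighted-mediant inequality $\max_i\frac{|b_i-a_i^Tx_k|^2}{\|a_i\|_2^2}\geq\frac{\|b-Ax_k\|_2^2}{\|A\|_F^2}$ forces $\epsilon_k\geq\frac{1}{\|A\|_F^2}$, and since $\|A_{\tau_k}\|_F^2\geq\sigma_{\max}^2(A_{\tau_k})$ we get $q_k\geq\frac{1}{2\nu\|A\|_F^2}>0$, so the product tends to zero.

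The main obstacle is the denominator estimate: one must recognize that $\eta_k$ is supported only on the coordinates in $\tau_k$, so that the relevant operator-norm constant is $\sigma_{\max}(A_{\tau_k})$ for the active submatrix rather than $\sigma_{\max}(A)$, and pair this correctly with the $\|A_{\tau_k}\|_F^2$ arising from the threshold definition of $\tau_k$. Everything else is a direct adaptation of the Theorem \ref{thm1} argument, the only genuine bookkeeping being the $\theta$-dependence hidden inside $\epsilon_k$, which cancels out of the contraction identity and only resurfaces when establishing the uniform positivity of $q_k$.
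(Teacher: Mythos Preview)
Your proof is correct and follows essentially the same route as the paper: invoke Lemma~\ref{thm0}, simplify $\eta_k^T(b-Ax_k)=\|\eta_k\|_2^2$, bound $\|A^T\eta_k\|_2^2\leq\sigma_{\max}^2(A_{\tau_k})\|\eta_k\|_2^2$ via the restriction to the active rows, apply the threshold defining $\tau_k$ to introduce $\epsilon_k\|A_{\tau_k}\|_F^2$, and finish with Lemma~\ref{lem2} and \eqref{equ7}. Your treatment is in fact slightly more careful than the paper's, since you explicitly verify $x_k^*\in\partial f(x_k)\cap\mathcal{R}(A^T)$ (needed for Lemma~\ref{lem2}) and establish a uniform positive lower bound $q_k\geq\tfrac{1}{2\nu\|A\|_F^2}$, which the paper defers to Corollary~\ref{cor}; the only item the paper adds that you omit is the check $q_k\leq\tfrac12<1$, obtained from $\epsilon_k\|A_{\tau_k}\|_F^2\leq1$.
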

\begin{proof}
	 It follows from Lemma $\ref{thm0}$ that
	 $$
	 		D_f^{x_{k+1}^*}(x_{k+1},\hat{x})\leq D_{f}^{x_k^*}(x_k,\hat{x})-\frac{1}{2}\cdot \frac{(\eta_{k}^T(b-Ax_k))^2}{\|A^T\eta_{k}\|_2^2}.
	 $$
	 
	 Denote $E_k\in \mathbb{R}^{m\times [\tau_{k}]}$ by the matrix whose columns in turn are composed of all the vectors $e_i\in\mathbb{R}^{m}$ with $i \in \tau_{k}$, $[\tau_{k}]$ is the number of the index $\tau_{k}$, then $A_{\tau_{k}}=E_k^TA$. Set $\xi_k=E_k^T\eta_{k}$,
	 \begin{equation}
	 	\label{equ13}
	 	\left\|\xi_k\right\|_{2}^{2}=\eta_{k}^{T} E_{k} E_{k}^{T} \eta_{k}=\left\|\eta_{k}\right\|_{2}^{2}=\sum_{i \in \tau_{k}}\left|b_{i}-a_{i}^T x_{k}\right|^{2},
	 \end{equation}
	 and 
	$$
		\label{equ14}
		\left\|A^{T} \eta_{k}\right\|_{2}^{2}=\eta_{k}^{T} A A^{T} \eta_{k}=\xi_k^{T} E_{k}^{T} A A^{T} E_{k} \xi_k=\xi_k^{T} A_{\tau_{k}} A_{\tau_{k}}^{T} \xi_k=\left\|A_{\tau_{k}}^{T} \xi_k\right\|_{2}^{2}.
	$$
	 By making use of $\eqref{equ13}$, it is obtained that
	 $$
	 	\label{equ15}
	 	\begin{aligned} \eta_{k}^{T}\left(b-A x_{k}\right) & =\left(\sum_{i \in \tau_{k}}\left(b_{i}-a_{i}^T x_{k}\right) e_{i}^{T}\right)\left(b-A x_{k}\right) \\ & =\sum_{i \in \tau_{k}}\left(\left(b_{i}-a_{i}^T x_{k}\right) e_{i}^{T}\left(b-A x_{k}\right)\right) \\ & =\sum_{i \in \tau_{k}}\left|b_{i}-a_{i} ^Tx_{k}\right|^{2} \\ & =\left\|\xi_k\right\|_{2}^{2}.\end{aligned}
	$$
	 Therefore,
	 $$
	 	\frac{(\eta_{k}^T(b-Ax_k))^2}{\|A^T\eta_{k}\|_2^2}=\frac{\left\|\xi_k\right\|_{2}^{2}\cdot \left\|\xi_k\right\|_{2}^{2}}{\left\|A_{\tau_{k}}^{T} \xi_k\right\|_{2}^{2}}=\frac{\sum_{i\in \tau_{k}}\left(b_{i}-a_{i}^T x_{k}\right)^2\cdot \left\|\xi_k\right\|_{2}^{2}}{\left\|A_{\tau_{k}}^{T} \xi_k\right\|_{2}^{2}}.
	 $$
	 Because of the inequality $\left\|A_{\tau_{k}}^{T} \xi_k\right\|_{2}^{2}\leq \sigma_{\max}^2(A_{\tau_{k}})\|\xi_k\|_2^2$, 
	 $$
	 \begin{aligned}
	 	\frac{\sum_{i\in \tau_{k}}\left(b_{i}-a_{i}^T x_{k}\right)^2\cdot \left\|\xi_k\right\|_{2}^{2}}{\left\|A_{\tau_{k}}^{T} \xi_k\right\|_{2}^{2}}&\geq  \frac{\sum_{i\in \tau_{k}}\left(b_{i}-a_{i}^T x_{k}\right)^2}{\sigma_{\max}^2(A_{\tau_{k}})}\\
	 	&\geq \frac{\sum_{i\in \tau_{k}}\epsilon_{k}\|b-Ax_k\|_2^2\|a_i\|_2^2}{\sigma_{\max}^2(A_{\tau_{k}})}\\
	 	&=\frac{\epsilon_{k}\|A_{\tau_{k}}\|_F^2\|b-Ax_k\|_2^2}{\sigma_{\max}^2(A_{\tau_{k}})}
	 \end{aligned}
	 $$
	 Hence,
	 $$
	 \begin{aligned}
	 	D_f^{x_{k+1}^*}(x_{k+1},\hat{x})& \leq D_{f}^{x_k^*}(x_k,\hat{x})-\frac{1}{2}\cdot \frac{(\eta_{k}^T(b-Ax_k))^2}{\|A^T\eta_{k}\|_2^2}\\
	 	& \leq D_{f}^{x_k^*}(x_k,\hat{x})-\frac{1}{2}\cdot \frac{\epsilon_{k}\|A_{\tau_{k}}\|_F^2\|b-Ax_k\|_2^2}{\sigma_{\max}^2(A_{\tau_{k}})}.
	 \end{aligned}
	 $$
	 Due to $D_f^{x_k^*}(x_k,\hat{x})\leq \nu \|Ax_k-b\|_2^2$, so
	 $$
	 D_f^{x_{k+1}^*}(x_{k+1},\hat{x})\leq \left(1-\frac{\epsilon_{k} \|A_{\tau_{k}}\|_F^2}{2\cdot \nu\sigma_{\max}^2(A_{\tau_{k}})} \right)D_{f}^{x_k^*}(x_k,\hat{x}).
	 $$
	  For any nonzero vector $\tilde{y}\in \mathbb{R}^{[\tau_{k}]}$, it is obtained that
	 $$
	 \sigma^2_{\min}(A) \leq \tilde{\sigma}_{\min}(A)\leq \frac{\left(E_{k} \tilde{y}\right)^{T} A_{J} A_{J}^{T}\left(E_{k} \tilde{y}\right)}{\left(E_{k} \tilde{y}\right)^{T}\left(E_{k}\tilde{y}\right)}=\frac{\left(E_{k} \tilde{y}\right)^{T} A A^{T}\left(E_{k} \tilde{y}\right)}{\left(E_{k} \tilde{y}\right)^{T}\left(E_{k}\tilde{y}\right)}=\frac{\tilde{y}^{T} A_{\tau_{k}} A_{\tau_{k}}^{T} \tilde{y}}{\tilde{y}^{T}\tilde{y}} \leq\sigma_{\max}^2(A_{\tau_{k}}),
	 $$
	 and
	 $$
	 \epsilon_{k}\left\|A_{\tau_{k}}\right\|_{F}^{2}=\sum_{i \in \tau_{k}} \epsilon_{k}\left\|a_{i}\right\|_{2}^{2} \leq \sum_{i \in \tau_{k}} \frac{\left|b_{i}-a_{i}^T x_{k}\right|^{2}}{\left\|b-A x_{k}\right\|_{2}^{2}} \leq 1,
	 $$
	 so if $b\neq 0$, then $\hat{x}\neq 0$, 
	 $$
	 0<q_k=\frac{1}{2}\cdot\epsilon_{k}\|A_{\tau_{k}}\|_F^2\cdot \frac{\sigma^2_{\min}(A)}{\sigma_{\max}^2(A_{\tau_{k}})}\cdot \frac{|\hat{x}|_{\min}}{|\hat{x}|_{\min}+2\lambda}\leq \frac{1}{2}< 1.
	 $$

	 If $x_0=x_0^*=0$ and using inequality ($\ref{equ7}$), by induction, it can be concluded that
	 $$
	 \|x_k-\hat{x}\|_2\leq \prod_{j=0}^{k} \left(1-q_j\right)^{\frac{k}{2}}\sqrt{\left(2\lambda \|\hat{x}\|_1+\|\hat{x}\|_2^2\right)}.
	 $$
\end{proof}
It is observed from Theorem \ref{thm2} that $q_k$ is changing during the iteration process because of the adapative strategy. Hence, the following corollary is given for the linear convergence of the Algorithm \ref{agm3}.
\begin{corollary}
	\label{cor}
	 Given an initial vector $x_0=x_0^*=0$, assuming that $b\neq 0$ is in the range $\mathcal{R}(A)$ of $A$. Then, the iteration sequence $\{x_k\}_{k=0}^{\infty}$ of the Algorithm $\ref{agm3}$ converge to the unique solution $\hat{x}$ of the regularized basis pursuit problem $\eqref{equ1}$, it holds that 
	 $$
	 D_f^{x_{k+1}^*}(x_{k+1},\hat{x})\leq \left(1-\tilde{q}\right)D_{f}^{x_k^*}(x_k,\hat{x}).
	 $$
	 and
	 $$
	 \|x_k-\hat{x}\|_2\leq \left(1-\tilde{q}\right)^{\frac{k}{2}}\sqrt{\left(2\lambda \|\hat{x}\|_1+\frac{1}{2}\|\hat{x}\|_2^2\right)}.
	 $$
	 where $\tilde{q} =\frac{1}{2\nu \sigma^2_{\max}(A)\kappa^2(A)}$, $\kappa(A)=\frac{\sigma_{\max}(A)}{\sigma_{\min}(A)}$ denotes the condition number of $A$.
\end{corollary}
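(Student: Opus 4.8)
The plan is to turn the step-dependent contraction $D_f^{x_{k+1}^*}(x_{k+1},\hat{x})\le (1-q_k)D_f^{x_k^*}(x_k,\hat{x})$ of Theorem \ref{thm2} into one governed by a single constant, by exhibiting a lower bound $q_k\ge\tilde{q}>0$ valid at \emph{every} iteration. Once such a $\tilde{q}$ is found, $(1-q_k)\le(1-\tilde{q})<1$, so the one-step estimate of Theorem \ref{thm2} immediately gives $D_f^{x_{k+1}^*}(x_{k+1},\hat{x})\le(1-\tilde{q})D_f^{x_k^*}(x_k,\hat{x})$, and the decay of the Bregman distances is geometric with a fixed ratio. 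Everything then reduces to estimating, uniformly in $k$, the three $\tau_k$-dependent quantities in $q_k=\dfrac{\epsilon_k\|A_{\tau_k}\|_F^2}{2\nu\,\sigma_{\max}^2(A_{\tau_k})}$.

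I would first bound the adaptive parameter from below by $\epsilon_k\ge 1/\|A\|_F^2$. Writing $r_k=b-Ax_k$, the identity $\|r_k\|_2^2=\sum_i\frac{|r_{k,i}|^2}{\|a_i\|_2^2}\|a_i\|_2^2\le\bigl(\max_j\frac{|r_{k,j}|^2}{\|a_j\|_2^2}\bigr)\|A\|_F^2$ shows the maximum term in $\epsilon_k$ is at least $\|r_k\|_2^2/\|A\|_F^2$, so the convex combination defining $\epsilon_k$ satisfies $\epsilon_k\ge\theta/\|A\|_F^2+(1-\theta)/\|A\|_F^2=1/\|A\|_F^2$ for every $\theta\in[0,1]$. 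Next, since $A_{\tau_k}=E_k^{T}A$ is a row submatrix of $A$ (with $E_k$ as in the proof of Theorem \ref{thm2}), one has $\sigma_{\max}(A_{\tau_k})\le\sigma_{\max}(A)$, while the elementary inequality $\|A_{\tau_k}\|_F^2\ge\sigma_{\max}^2(A_{\tau_k})$ controls the numerator. Combining these gives $q_k\ge\frac{\epsilon_k}{2\nu}\ge\frac{1}{2\nu\|A\|_F^2}$; bounding $\|A\|_F^2=\sum_i\sigma_i^2(A)$ from above in terms of $\sigma_{\max}^2(A)$ and the condition number, i.e. $\|A\|_F^2\le\kappa^2(A)\sigma_{\max}^2(A)$, then produces the asserted constant $\tilde{q}=\frac{1}{2\nu\sigma_{\max}^2(A)\kappa^2(A)}$. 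Positivity $\tilde{q}>0$ follows from $b\ne 0\Rightarrow\hat{x}\ne 0$, exactly as in the closing estimate of Theorem \ref{thm2}.

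With the uniform contraction established, the norm bound follows by a routine induction. Telescoping yields $D_f^{x_k^*}(x_k,\hat{x})\le(1-\tilde{q})^{k}D_f^{x_0^*}(x_0,\hat{x})$, and because $x_0=x_0^*=0$ the initial Bregman distance evaluates to $D_f^{x_0^*}(x_0,\hat{x})=f(\hat{x})=\lambda\|\hat{x}\|_1+\frac{1}{2}\|\hat{x}\|_2^2$. Invoking inequality \eqref{equ7}, namely $\frac{1}{2}\|x_k-\hat{x}\|_2^2\le D_f^{x_k^*}(x_k,\hat{x})$, and taking square roots gives $\|x_k-\hat{x}\|_2\le(1-\tilde{q})^{k/2}\sqrt{2\lambda\|\hat{x}\|_1+\|\hat{x}\|_2^2}$, which is the claimed estimate.

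The main obstacle is the uniform lower bound on $q_k$: because $\tau_k$ is selected adaptively and changes from step to step, both $\|A_{\tau_k}\|_F^2$ and $\sigma_{\max}(A_{\tau_k})$ fluctuate with $k$, and one must bound the ratio $\epsilon_k\|A_{\tau_k}\|_F^2/\sigma_{\max}^2(A_{\tau_k})$ below by a genuinely $k$-free quantity. The delicate ingredient is the estimate $\epsilon_k\ge 1/\|A\|_F^2$, which has to hold simultaneously for all residuals $r_k$ and all $\theta\in[0,1]$; the remaining singular-value comparisons ($\sigma_{\max}(A_{\tau_k})\le\sigma_{\max}(A)$ and $\|A_{\tau_k}\|_F^2\ge\sigma_{\max}^2(A_{\tau_k})$) are standard once $A_{\tau_k}=E_k^{T}A$ is recognized as a row submatrix of $A$, paralleling the argument already used in the proof of Theorem \ref{thm2}.
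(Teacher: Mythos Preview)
Your overall plan coincides with the paper's: invoke Theorem~\ref{thm2} and show that the step-dependent factor satisfies $q_k\ge\tilde q$ for every $k$. The ingredients $\epsilon_k\ge 1/\|A\|_F^2$ and $\|A_{\tau_k}\|_F^2\ge\sigma_{\max}^2(A_{\tau_k})$ are valid, and they do give the correct intermediate bound $q_k\ge \dfrac{1}{2\nu\|A\|_F^2}$. The gap is the very last step: the inequality
\[
\|A\|_F^2 \;\le\; \kappa^2(A)\,\sigma_{\max}^2(A)\;=\;\frac{\sigma_{\max}^4(A)}{\sigma_{\min}^2(A)}
\]
on which you rely is \emph{false} in general. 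For $A=I_n$ with $n\ge2$ one has $\|A\|_F^2=n$ while $\kappa^2(A)\,\sigma_{\max}^2(A)=1$. More generally, $\|A\|_F^2=\sum_i\sigma_i^2(A)$ can be as large as $\operatorname{rank}(A)\cdot\sigma_{\max}^2(A)$, and nothing forces $\kappa^2(A)\ge\operatorname{rank}(A)$. Hence from $q_k\ge 1/(2\nu\|A\|_F^2)$ you cannot deduce $q_k\ge\tilde q$, and the passage to the stated constant $\tilde q=\dfrac{1}{2\nu\,\sigma_{\max}^2(A)\,\kappa^2(A)}$ breaks down.

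The paper does not attempt the Frobenius comparison $\|A\|_F^2\le\kappa^2(A)\sigma_{\max}^2(A)$. Instead it keeps the block quantities in play: alongside $\epsilon_k\|A\|_F^2\ge1$ it bounds the ratio $\|A_{\tau_k}\|_F^2/\sigma_{\max}^2(A_{\tau_k})$ via singular-value inequalities for $A_{\tau_k}$ and then uses the submatrix condition-number comparison $\kappa(A_{\tau_k})\le\kappa(A)$ to replace $\kappa(A_{\tau_k})$ by $\kappa(A)$. So while your global strategy and the first two estimates are exactly right, the route to $\tilde q$ must go through $\kappa(A_{\tau_k})$ rather than through an (unavailable) upper bound on $\|A\|_F^2$.
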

\begin{proof}
	From $\epsilon_k\|A\|_F^2\geq 1$, $\max\{m,n\}\sigma_{\min}^{2}(A_{\tau_{k}})\|A_{\tau_{k}}\|_F^2\leq \max\{m,n\}\sigma_{\max}^{2}(A_{\tau_{k}})$ and $\kappa(A)\geq\kappa(A_{\tau_{k}})$, it is seen that 
	$$
	q_k=\frac{\epsilon_{k} \|A_{\tau_{k}}\|_F^2}{ 2\cdot \nu \sigma_{\max}^2(A_{\tau_{k}})} \geq \frac{1}{2\nu \sigma_{\max}^2(A)\kappa^2(A)}=\tilde{q}.
	$$
\end{proof}

It is observed from Theorem $\ref{thm2}$ that if $\theta=0$, then $\epsilon_k \|A_{\tau_{k}}\|_F^2=\frac{\|A_{\tau_{k}}\|_F^2}{\|A\|_F^2}$, compared the contraction factor $q$ in Theorem $\ref{thm1}$ with that of Theorem $\ref{thm2}$ ,
$$
\frac{q}{q_k}=\frac{\|A_{\tau_{k}}\|_F^2}{\|A\|_F^2}\cdot \frac{\sigma_{\max}^2(A)}{\sigma_{\max}^2(A_{\tau_{k}})}.
$$
Therefore, the partial surrogate hyperplane sparse Kaczmarz method and the residual-based surrogate hyperplane sparse Kaczmarz method have the same contraction factor when $\tau_{k}=m$, $\frac{q}{q_k}=1$.

\section{Numerical Experiments}
\label{sec:4}
\quad In this section, numerical experiments are presented to verify the efficiency of the proposed methods for the applicaiton from random Gaussian matrices, SuiteSparse matrices and image reconstruction. The residual-based surrogate hyperplane sparse Kaczmarz method (denoted by ‘SHSKR’) is compared with the partial residual-based surrogate hyperplane sparse Kaczmarz method for different parameters $\theta$, denoted by SHSKPR($\theta$=1), SHSKPR($\theta$=0.5), and SHSKPR($\theta$=0), respectively.

The relative solution error (RSE) is defined as:
$$
\mathrm{RSE}=\frac{\left\|x_{k}-\hat{x}\right\|_{2}^{2}}{\left\|\hat{x}\right\|_{2}^{2}}.
$$

All the methods start from zero vector and terminates once the stopping tolerance $\mathrm{RSE}<10^{-6}$ is satisfied or the number of iteration steps exceed 100000. In the numerical experiments, $\lambda =1.5$ for the soft shrinkage function $S_{\lambda}(x)$ and the number of the nonzero elements in $\hat{x}$ is $0.01\times n$. 

\quad In Tables $\ref{tab1}$ and $\ref{tab2}$, the number of iteration steps (denoted as ‘IT’) and the elapsed computing time in seconds (denoted as ‘CPU’) are listed for the SHSKPR($\theta$=1), SHSKPR($\theta$=0.5), SHSKPR($\theta$=0) and SHSKR respectively when the size of the random Gaussian matrices varies.

\begin{table}[!htbp]
	\centering
	\caption{Numerical results for random overdetermined matrices}
	\label{tab1}
	\begin{tabular}{ccccccccc}
		\hline
		\multirow{2}{*}{$m \times n$} & \multicolumn{2}{c}{$2000\times 1000$} & \multicolumn{2}{c}{$3000\times 1500$} & \multicolumn{2}{c}{$4000 \times 2000$} & \multicolumn{2}{c}{$5000\times 2000$} \\ \cline{2-9} 
		& IT               & CPU                & IT               & CPU                & IT                & CPU                & IT               & CPU                \\ \hline
		SHSKPR($\theta=1$)            & 1681             & 0.9038             & 3356             & 5.8475             & 10634             & 33.3527            & 19037            & 91.2536            \\
		SHSKPR($\theta=0.5$)          & 85               & 0.0604             & 100              & 0.2589             & 350               & 1.4561             & 623              & 3.9041             \\
		SHSKPR($\theta=0$)            & 22               & 0.0160             & 41               & 0.0957             & 52                & 0.2072             & 66               & 0.4210             \\
		SHSKR                         & 20               & 0.0108             & 39               & 0.0646             & 45                & 0.1530             & 62               & 0.2795             \\ \hline
	\end{tabular}
\end{table}

\begin{table}[!httbp]
	\centering
	\caption{Numerical results for random underdetermined matrices}
	\label{tab2}
	\begin{tabular}{ccccccccc}
		\hline
		\multirow{2}{*}{$m \times n$} & \multicolumn{2}{c}{$1000\times 2000$} & \multicolumn{2}{c}{$1500\times 3000$} & \multicolumn{2}{c}{$2000 \times 4000$} & \multicolumn{2}{c}{$2500\times 5000$} \\ \cline{2-9} 
		& IT               & CPU                & IT               & CPU                & IT               & CPU                 & IT               & CPU                \\ \hline
		SHSKPR($\theta=1$)            & 7079             & 5.5684             & 13764            & 27.7832            & 25778            & 100.0145            & 57077            & 351.4208           \\
		SHSKPR($\theta=0.5$)          & 200              & 0.2017             & 430              & 1.1350             & 727              & 3.5158              & 1076             & 8.3482             \\ 
		SHSKPR($\theta=0$)            & 36               & 0.0325             & 86               & 0.2391             & 108              & 0.5167              & 299              & 2.3189             \\
		SHSKR                         & 31               & 0.0225             & 73               & 0.1565             & 97               & 0.3574              & 246              & 1.5068             \\ \hline
	\end{tabular}
\end{table}
From Tables $\ref{tab1}$ and $\ref{tab2}$, it is observed that the SHSKR method is the fastest and outperforms than the other three methods in terms of both iteration steps and elapsed time whether the random Gaussian matrices are overdetermined or underdetermined. 

In Figures $\ref{fig1}$ and $\ref{fig2}$, we depict the curves of the relative solution error versus the iteration steps for the SHSKPR($\theta$=1), SHSKPR($\theta$=0.5), SHSKPR($\theta$=0) and SHSKR methods respectively.
\begin{figure}[!htbp]
	\centering
	\subfigure[2000$\times$ 1000]{
		\includegraphics[width=7cm]{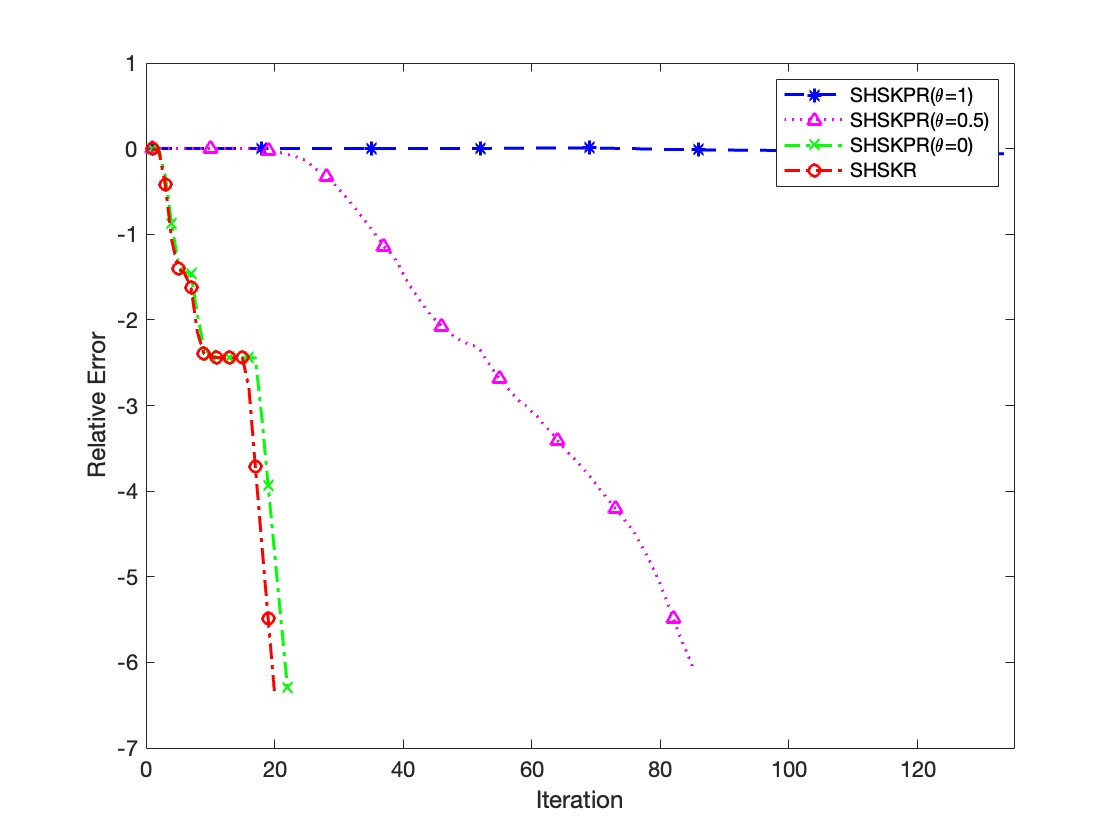}
	}
	\subfigure[3000$\times$ 1500]{
		\includegraphics[width=7cm]{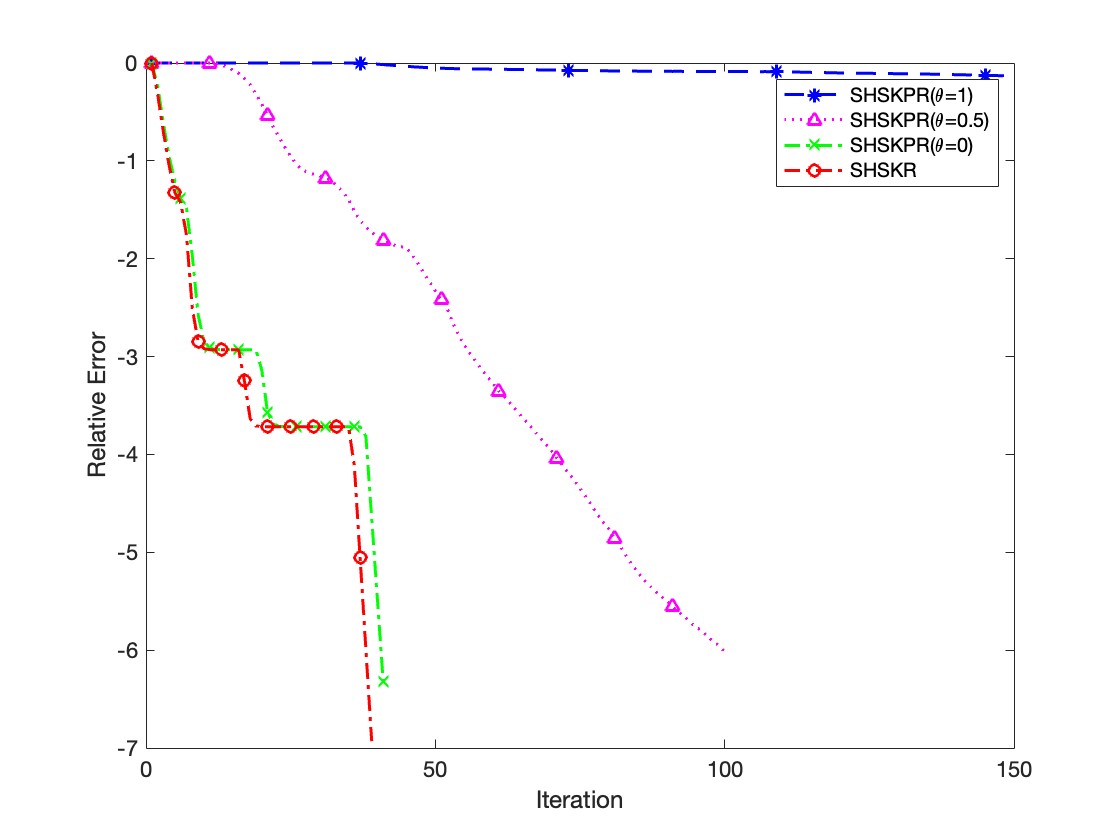}
	}
		\subfigure[4000$\times$ 2000]{
		\includegraphics[width=7cm]{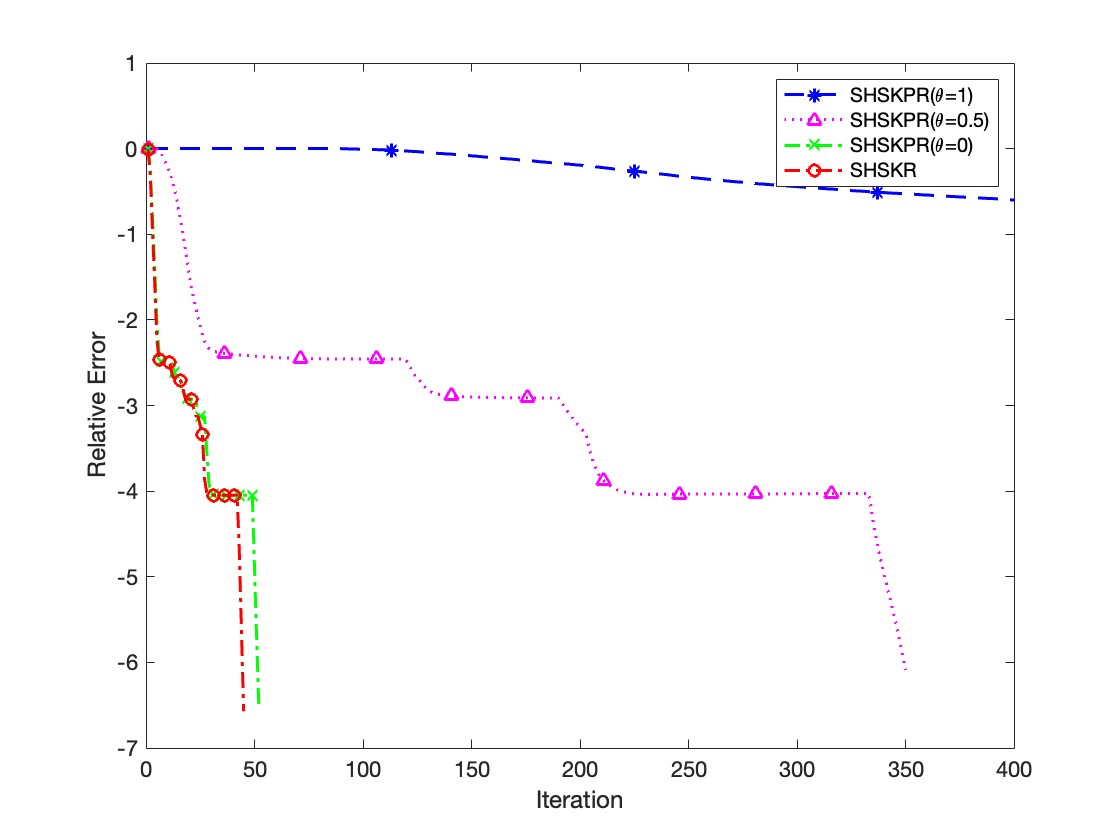}
	}
		\subfigure[5000$\times$ 2500]{
		\includegraphics[width=7cm]{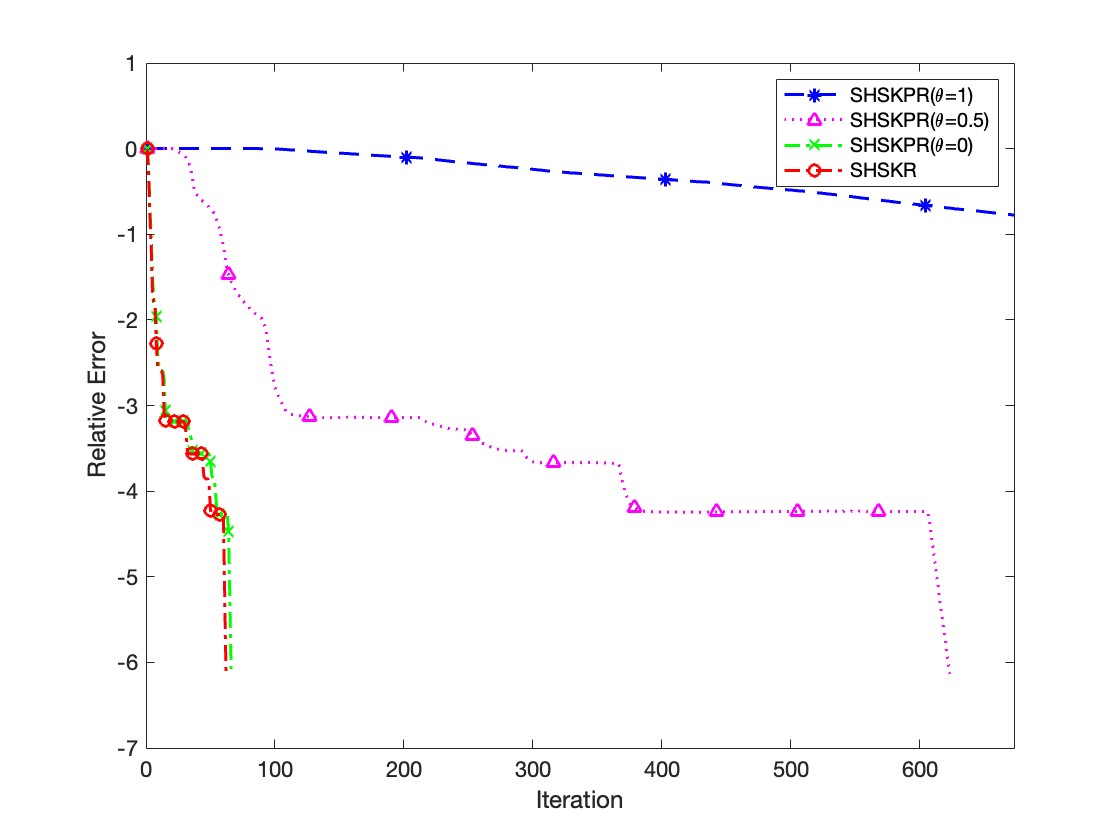}
	}
	\caption{Convergence curves for random overdetermined matrices.}
	\label{fig1}
\end{figure}

\begin{figure}[!htbp]
	\centering
	\subfigure[1000$\times$ 2000]{
		\includegraphics[width=7cm]{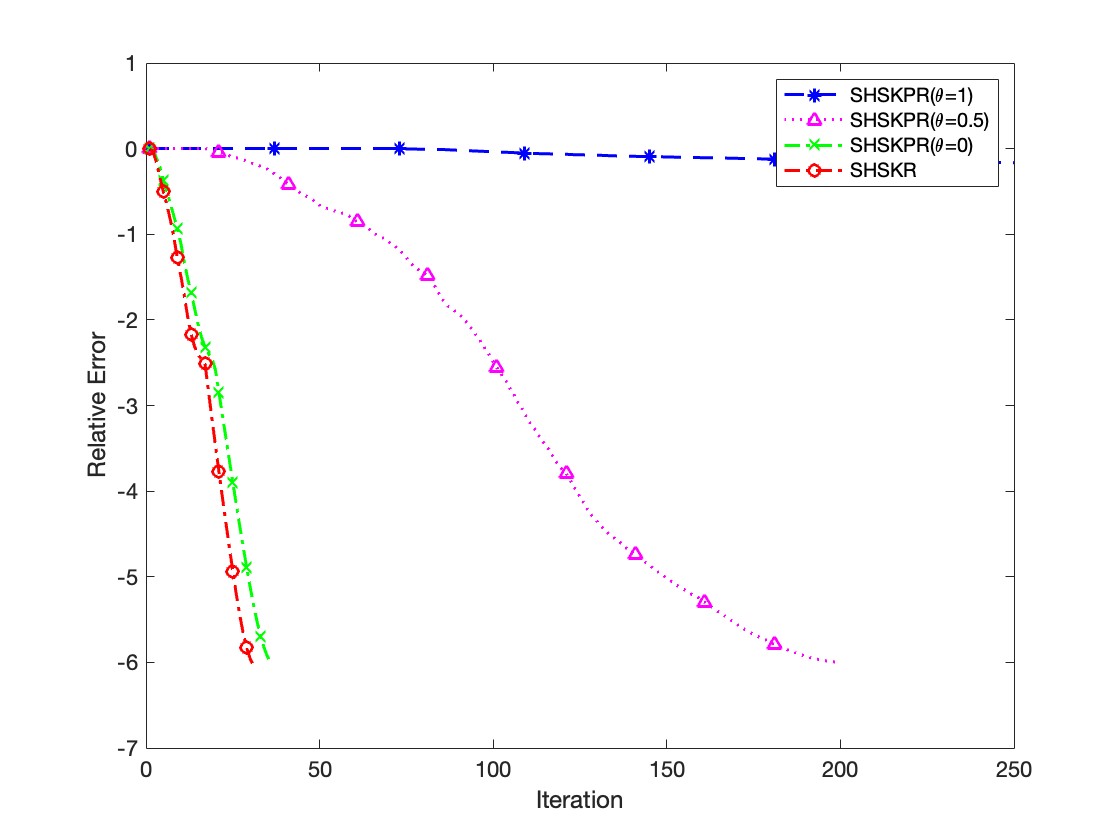}
	}
	\subfigure[1500$\times$ 3000]{
		\includegraphics[width=7cm]{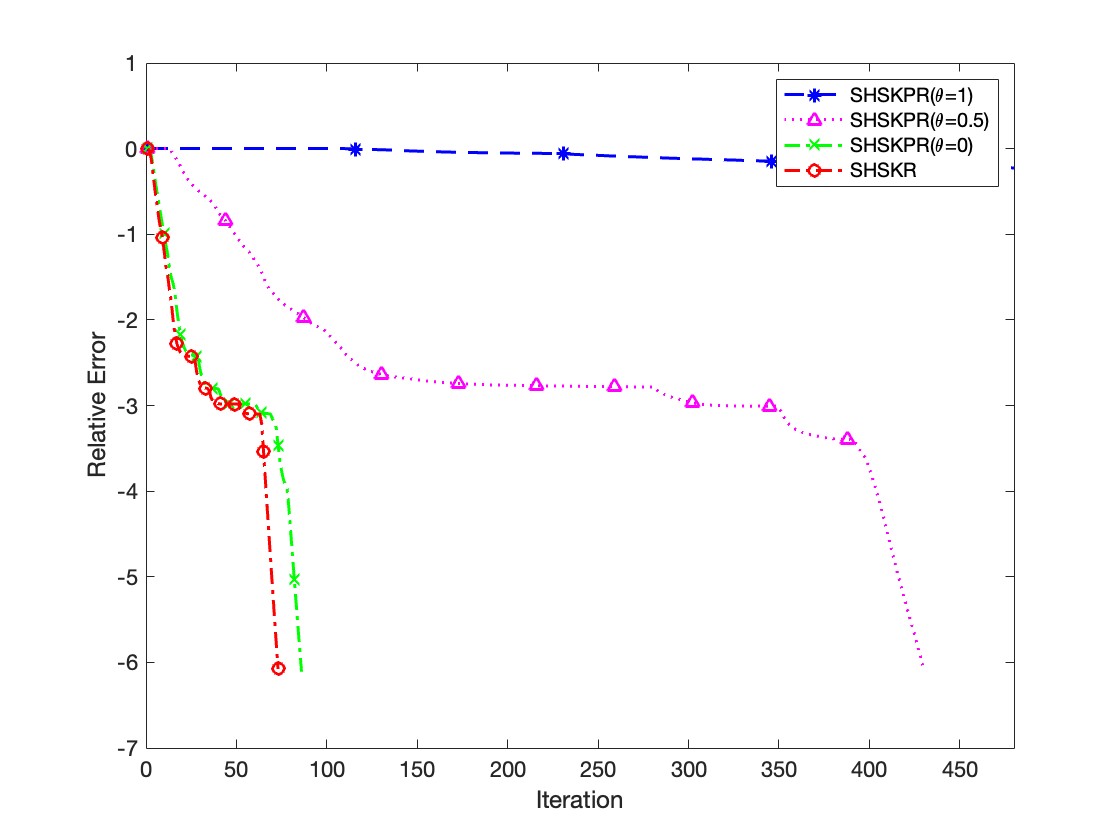}
	}
		\subfigure[2000$\times$ 4000]{
		\includegraphics[width=7cm]{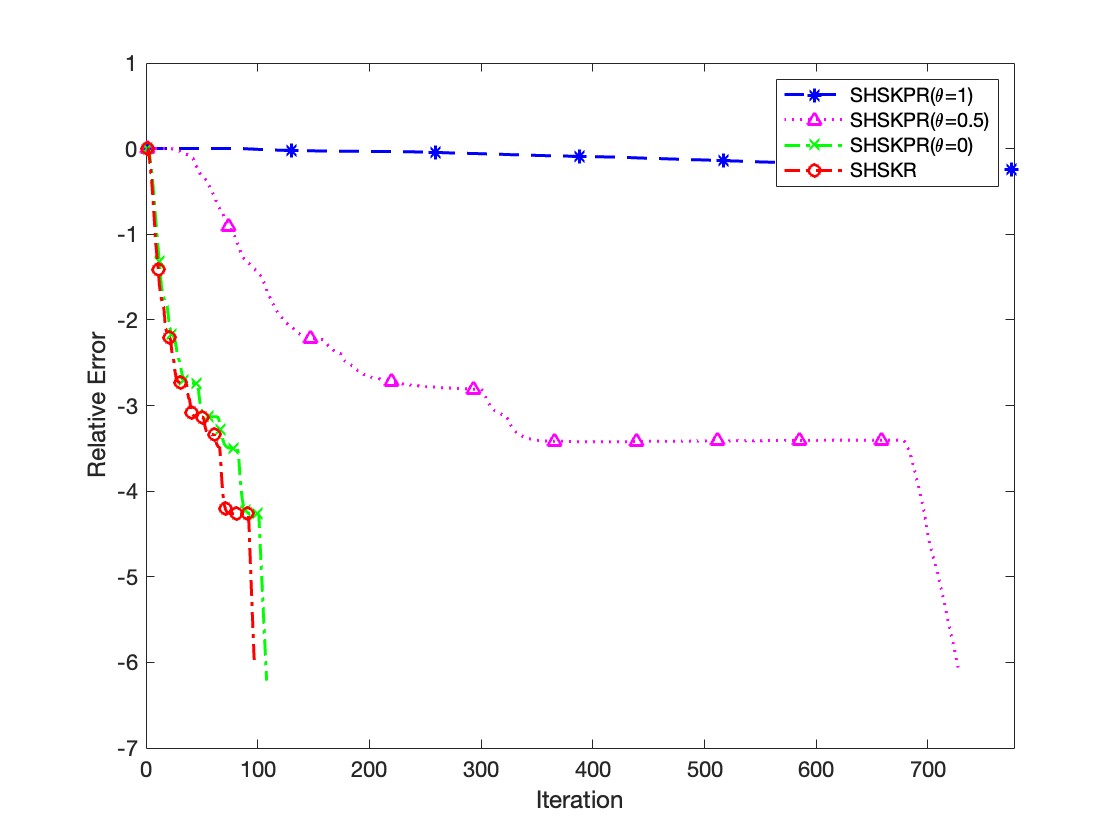}
	}
		\subfigure[2500$\times$ 5000]{
		\includegraphics[width=7cm]{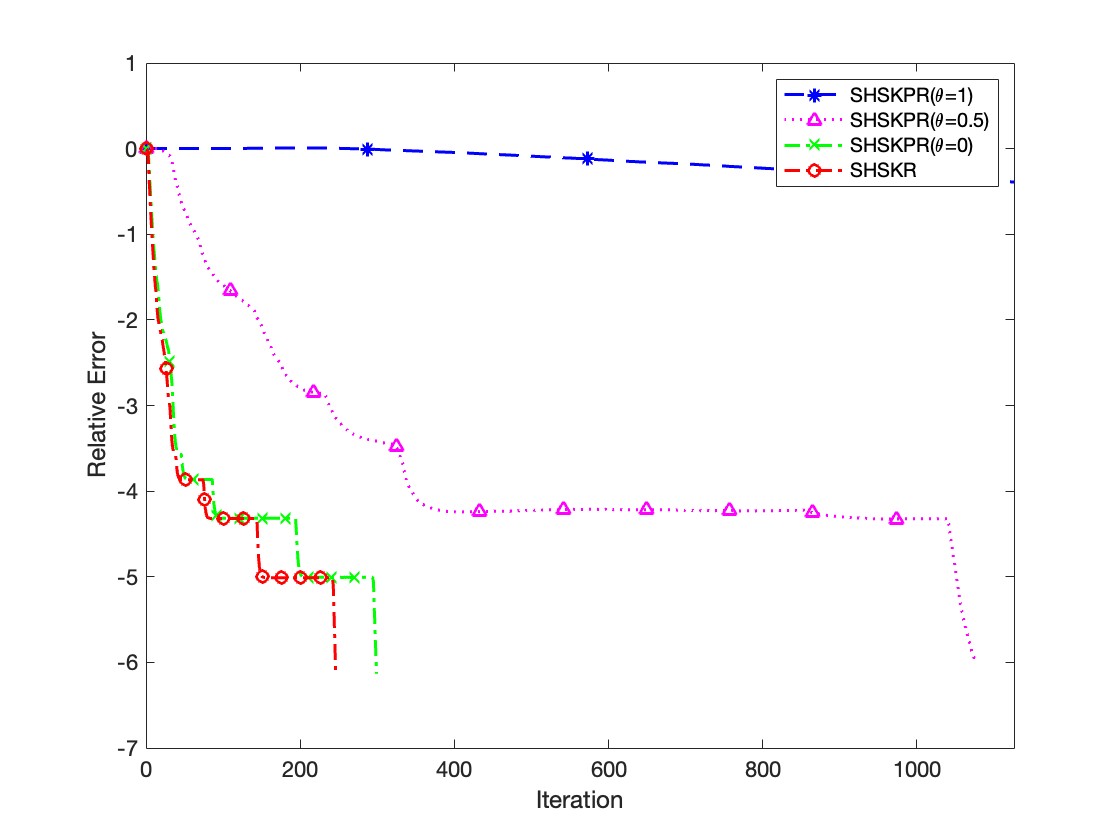}
	}
	\caption{Convergence curves for random underdetermined matrices.}
	\label{fig2}
\end{figure}
From Figures $\ref{fig1}$ and $\ref{fig2}$, it is seen that the SHSKPR($\theta$=1), SHSKPR($\theta$=0.5), SHSKPR($\theta$=0) and SHSKR methods converge successfully. Moreover, the SHSKR method requires the least number of iterations for all random matrices.

In Table $\ref{tab3}$, the properties of the SuiteSparse matrices including of dimensions, density, rank and condition number $\kappa(A)$ are listed.

\begin{table}[!htbp]
	\centering
	\caption{Matrices information}
	\label{tab3}
	\begin{tabular}{ccccc}
		\hline
		Name         & $m\times n$       & Density(\%) & Rank & $\kappa(A)$ \\ \hline
		bibd\_17\_3  & $136\times 680$   & 2.21        & 136  & 1.86        \\
		flower\_5\_1 & $211\times 201$   & 2.21        & 201  & Inf         \\
		str\_600     & $363\times 363$   & 2.40        & 363  & 190195.43   \\
		ash958       & $958\times 292$   & 0.68        & 292  & 3.20        \\
		well1850     & $1850\times 712$  & 0.65        & 712  & 111.31      \\
		illc1850     & $1850\times 712$  & 0.66        & 712  & 1404.90     \\
		relat6       & $2340\times 157$  & 2.21        & 157  & Inf         \\
		bibd\_81\_2  & $3240\times 3240$ & 0.03        & 3240 & 1.00           \\
		abtaha1      & $14596\times 209$ & 1.68        & 209  & 12.23       \\
		abtaha2      & $37932\times 331$ & 1.09        & 331  & 12.22       \\ \hline
	\end{tabular}
\end{table}

In Table $\ref{tab4}$, the number of iteration steps and elapsed time in seconds of the SHSKPR($\theta$=1), SHSKPR($\theta$=0.5), SHSKPR($\theta$=0) and SHSKR are reported respectively.

\begin{table}[!htbp]
	\centering
	\caption{Numerical results for matrices from SuiteSparse Matrix Collection.}
	\label{tab4}
	\begin{tabular}{ccccccccc}
		\hline
		\multirow{2}{*}{Name} & \multicolumn{2}{c}{SHSKPR($\theta=1$)} & \multicolumn{2}{c}{SHSKPR($\theta=0.5$)} & \multicolumn{2}{c}{SHSKPR($\theta=0$)} & \multicolumn{2}{c}{SHSKR} \\ \cline{2-9} 
		& IT                & CPU                & IT                & CPU                  & IT               & CPU                 & IT         & CPU          \\ \hline
		bibd\_17\_3           & 1349              & 0.0494             & 202               & 0.0101               & 122              & 0.0063              & 102        & 0.0034       \\
		flower\_5\_1          & 213               & 0.0070             & 86                & 0.0024               & 78               & 0.0022              & 72         & 0.0012       \\
		str\_600              & 648               & 0.0036             & 173               & 0.0017               & 90               & 0.0008              & 82         & 0.0004       \\
		ash958                & 80                & 0.0058             & 32                & 0.0040               & 24               & 0.0029              & 23         & 0.0011       \\
		well1850              & 241               & 0.0063             & 92                & 0.0046               & 46               & 0.0052              & 42         & 0.0020       \\
		illc1850              & 486               & 0.0079             & 134               & 0.0038               & 89               & 0.0025              & 79         & 0.0010       \\
		relat6                & 134               & 0.0144             & 24                & 0.0030               & 16               & 0.0018              & 15         & 0.0010       \\
		bibd\_81\_2           & 263               & 1.1925             & 150               & 0.8703               & 99               & 0.5799              & 95         & 0.4647       \\
		abtaha1               & 37                & 0.0283             & 11                & 0.0123               & 7                & 0.0081              & 7          & 0.0064       \\
		abtaha2               & 45                & 0.2118             & 27                & 0.1572               & 21               & 0.1276              & 20         & 0.0703       \\ \hline
	\end{tabular}
\end{table}

From Table $\ref{tab4}$, it is observed that the SHSKR method converges the fastest in terms of both iteration steps and elapsed time, compared with the other three methods. 

In Figure $\ref{fig3}$, the curves of the relative solution error versus the iteration steps for SHSKPR($\theta$=1), SHSKPR($\theta$=0.5), SHSKPR($\theta$=0) and SHSKR are plotted respectively.
\begin{figure}[!htbp]
	\centering
	\subfigure[	bibd\_17\_3]{
		\includegraphics[width=7cm]{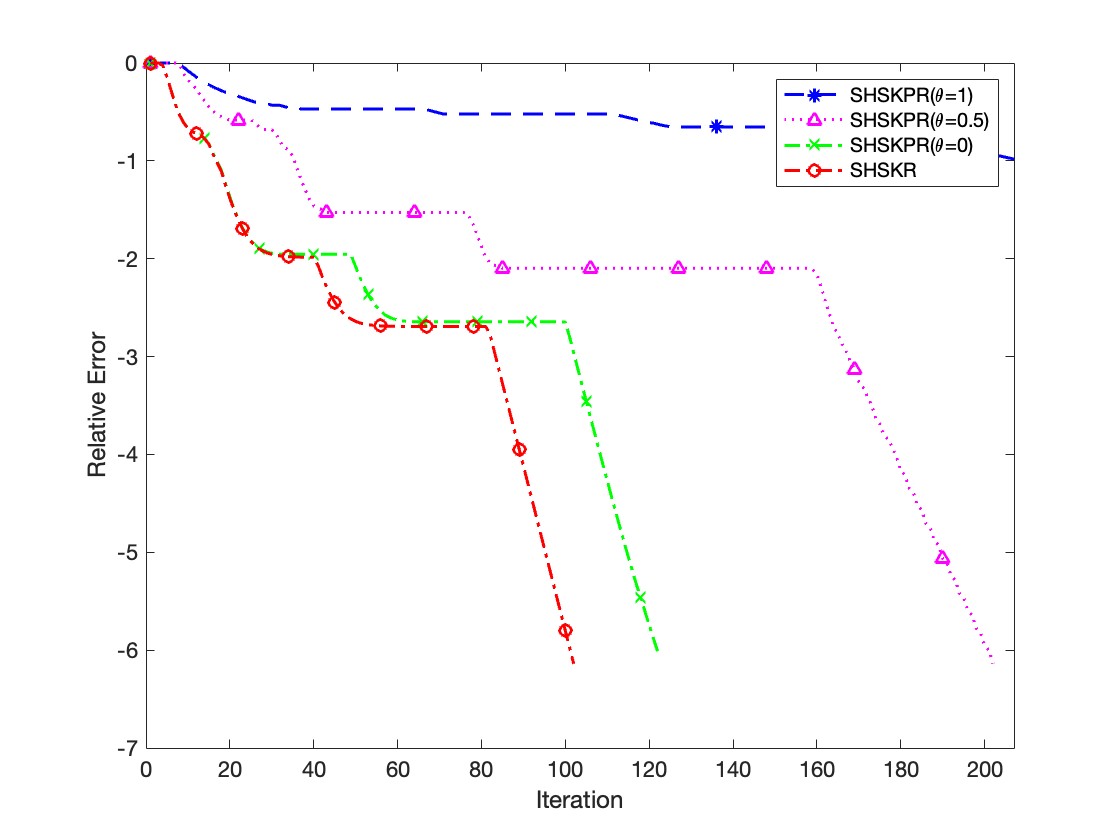}
	}
	\subfigure[well1850]{
		\includegraphics[width=7cm]{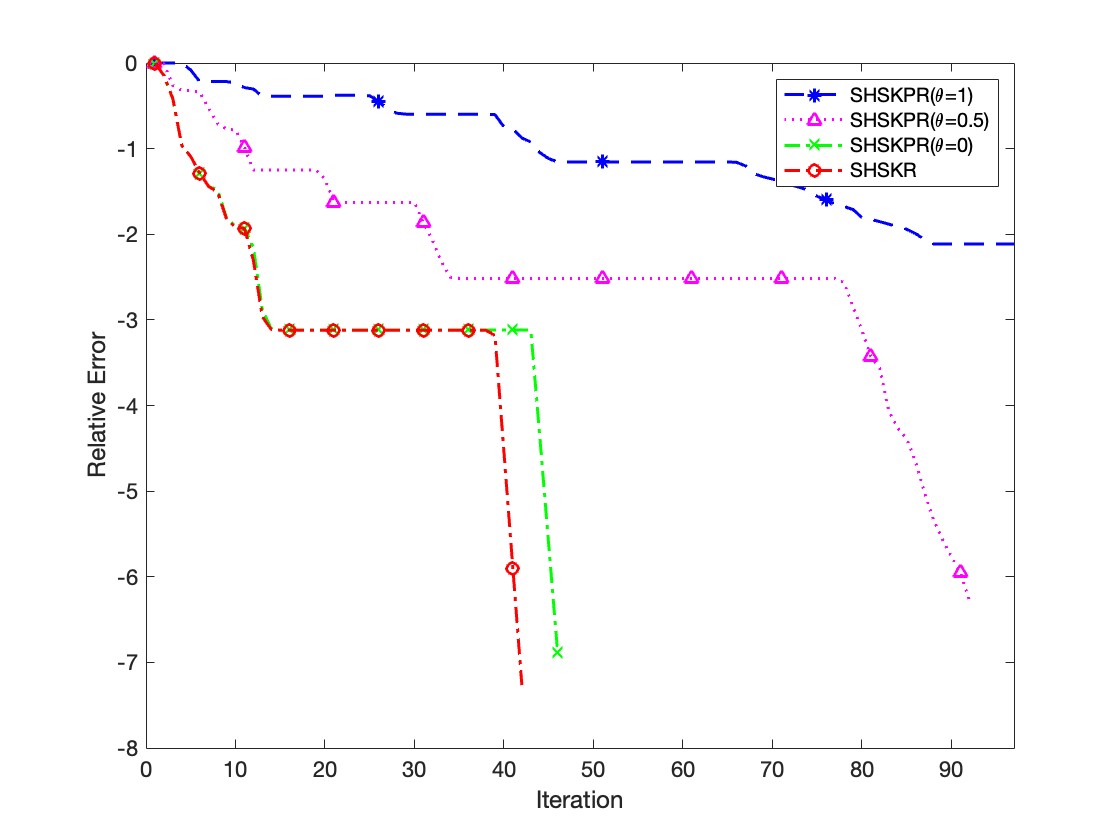}
	}
	\subfigure[relat6]{
		\includegraphics[width=7cm]{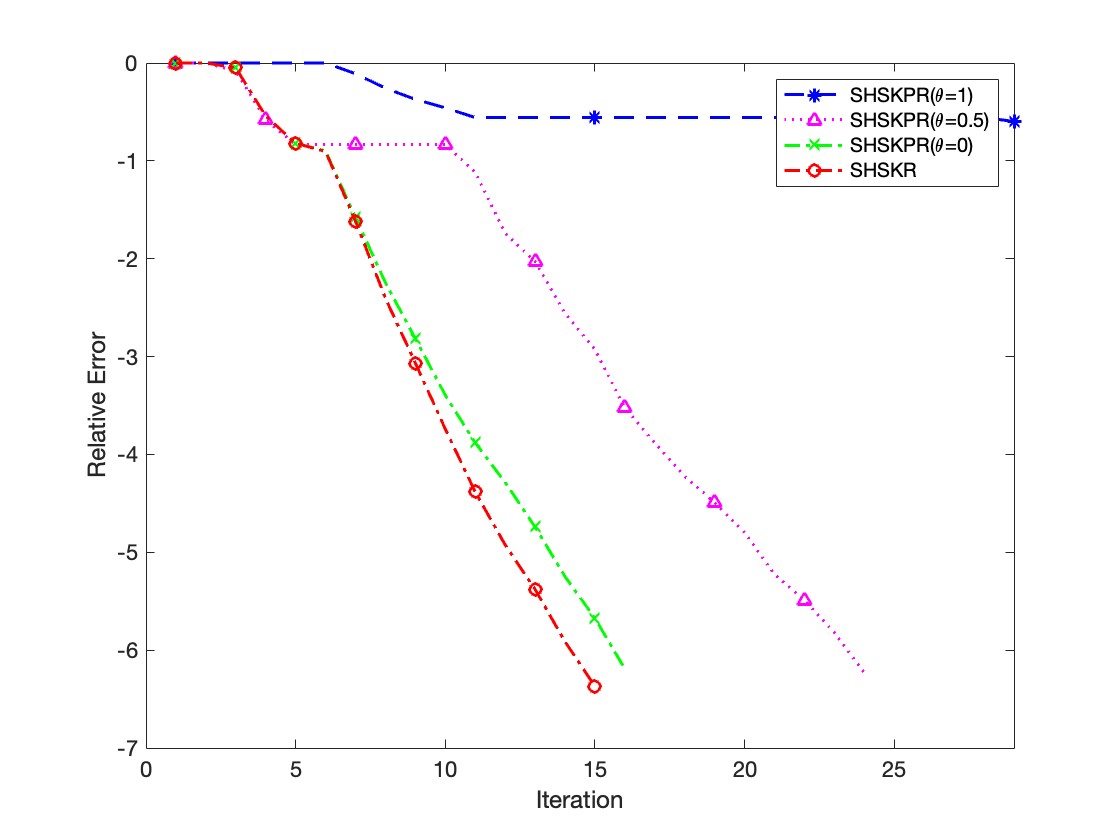}
	}
		\subfigure[str\_600]{
		\includegraphics[width=7cm]{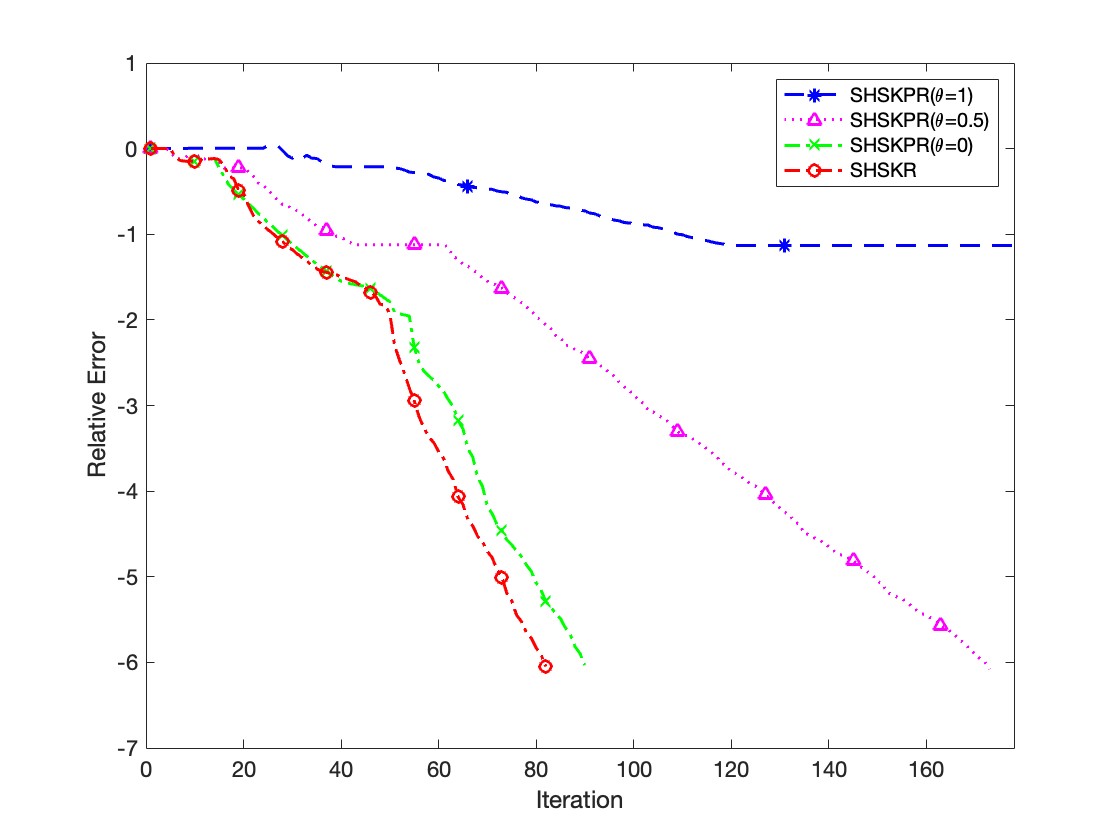}
	}
	\caption{Convergence curves for matrices from the SuiteSparse Matrix Collection.}
	\label{fig3}
\end{figure}

From Figure $\ref{fig3}$, it is seen that the convergence curve of SHSKR method is the fastest, which further confirm the eﬀiciency of the SHSKR method.

The test coefficient matrix is generated with the 2-D seismic travel-time tomography problem in AIR Tools II. The right term is $b=Ax_{*}+e$ where $e$ is the noise vector and the relative noise level is 0.01. The signal-noise ratio (SNR) is defined as 
$$
\mathrm{SNR}:=10 \log _{10} \frac{\sum_{i=1}^{n} x_{i}^{2}}{\sum_{i=1}^{n}\left(x_{i}-x_{*}\right)^{2}}.
$$
In Figure 4, the image reconstruction effect of four methods are plotted when they iterate for 200 times respectively. 

\begin{figure}[!htbp]
	\centering
	\subfigure[SHSKPR($\theta=1$), SNR=14.8280]{
		\includegraphics[width=6cm]{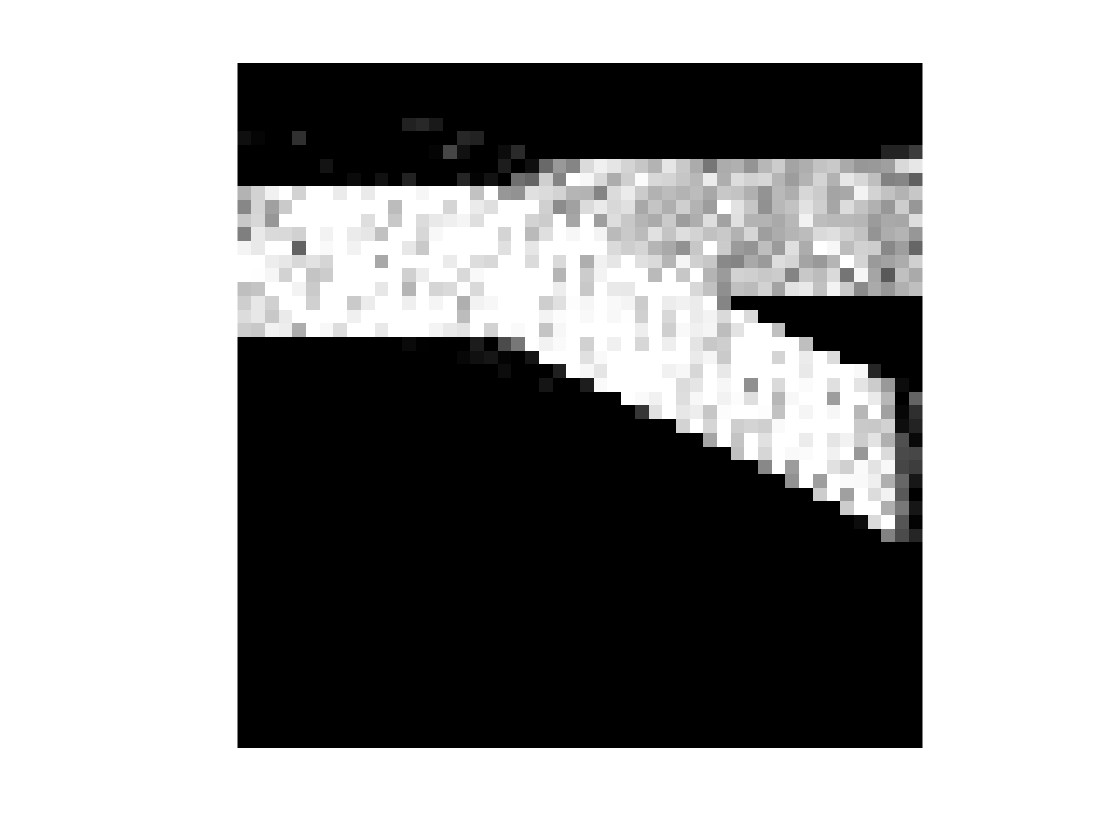}
	}
	\subfigure[SHSKPR($\theta=0.5$), SNR=19.8202]{
		\includegraphics[width=6cm]{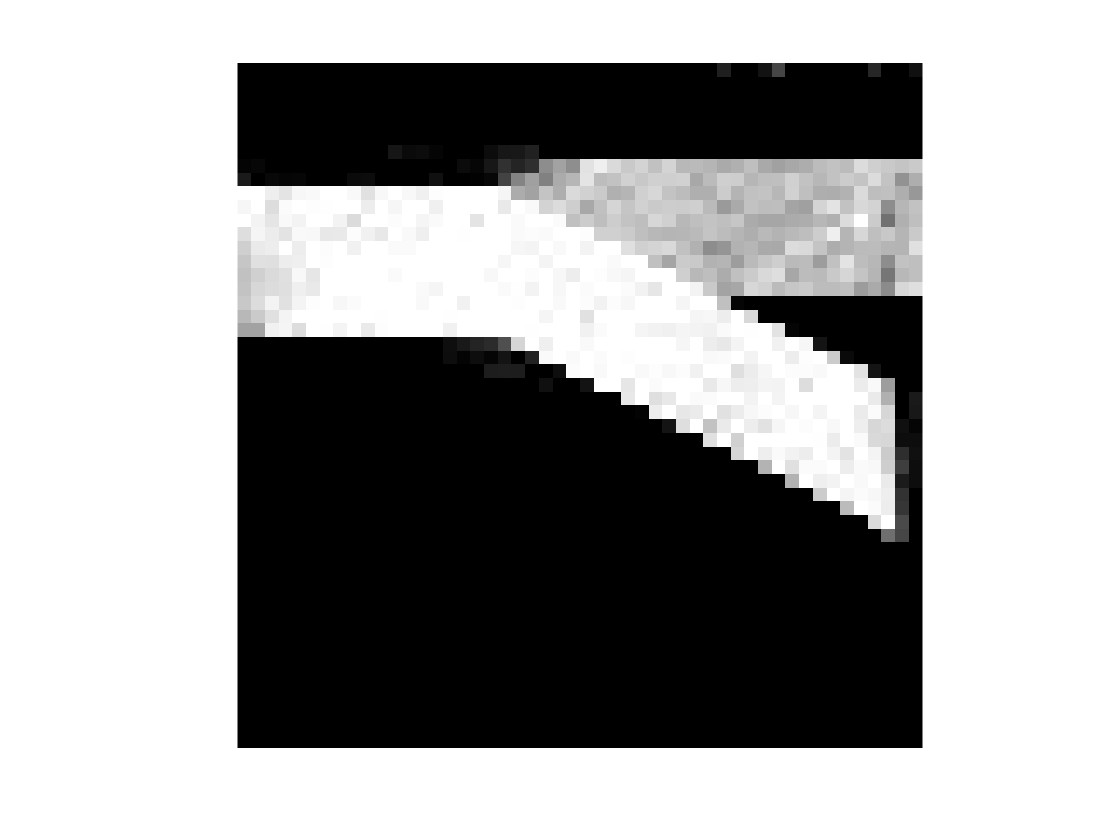}
	}
	\subfigure[SHSKPR($\theta=0$), SNR=23.2127]{
		\includegraphics[width=6cm]{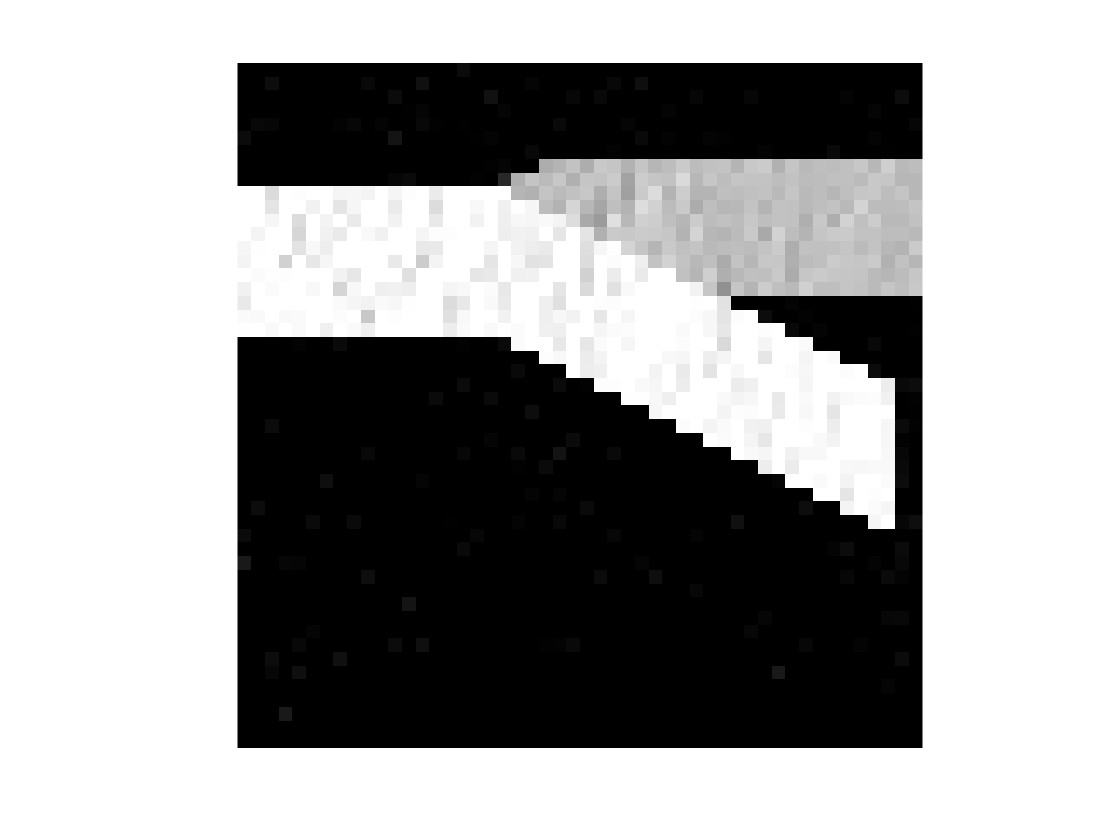}
	}
	\subfigure[SHSKR, SNR=23.9119]{
		\includegraphics[width=6cm]{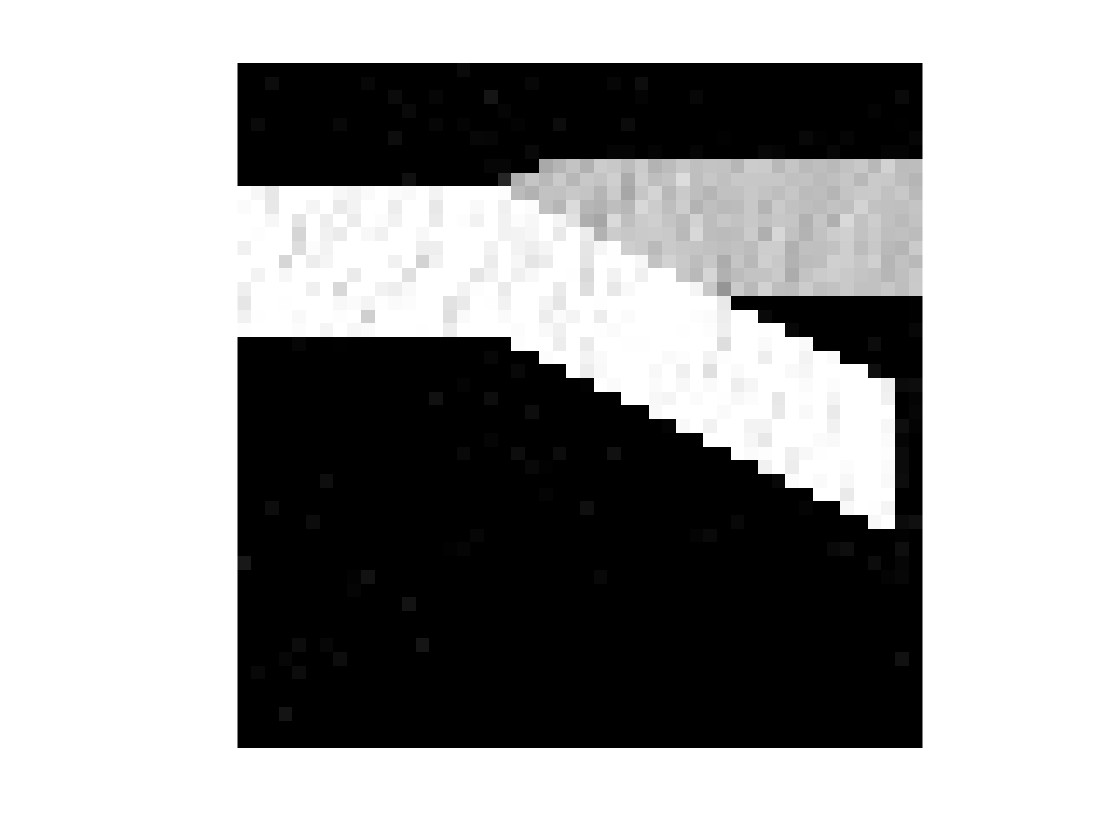}
	}
	\caption{Comparison of reconstruction results for 2-D seismic travel-time tomography.}
	\label{fig4}
\end{figure}
From Figure $\ref{fig4}$, it is seen that all four methods restore image eﬀiciently. In addition, the images reconstructed by the SHSKPR($\theta=0$) and SHSKR method are better than that of the other two methods from the view point of the sharpness of images and the values of SNR.

The matrix $A$ is generated with the VariantGaussianBlur in Restore Tools and the original numerical image is selected from the database in Matlab toolbox. In Figure $\ref{fig5}$, by using “VariantGaussianBlur” in Restore Tools to blur the original image, SHSKPR($\theta$=1), SHSKPR($\theta$=0.5), SHSKPR($\theta$=0) and SHSKR methods are plotted iterating 1000 steps to restore the number image from 0 to 9. 
\begin{figure}[!htbp]
	\centering
	\subfigure[original number image]{
		\includegraphics[width=15cm]{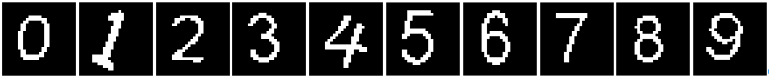}
	}
	\subfigure[blured number image]{
		\includegraphics[width=15cm]{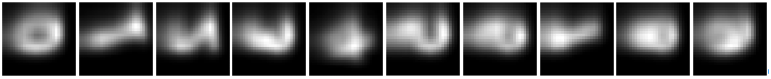}
	}
	\subfigure[SHSKPR($\theta=1$)]{
		\includegraphics[width=15cm]{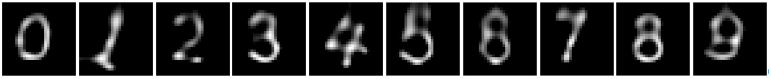}
	}
	\subfigure[SHSKPR($\theta=0.5$)]{
		\includegraphics[width=15cm]{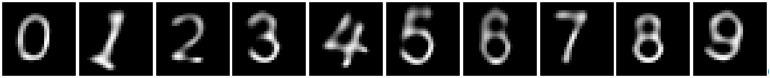}
	}
		\subfigure[SHSKPR($\theta=0$)]{
		\includegraphics[width=15cm]{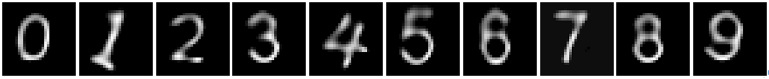}
	}
		\subfigure[SHSKR]{
		\includegraphics[width=15cm]{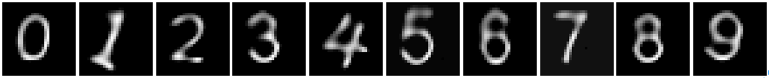}
	}
	\caption{Comparison of restoration results for the number ranging from 0 to 9.}
	\label{fig5}
\end{figure}

From Figure $\ref{fig5}$, it is seen that all four methods restore the blurred image successfully, where the SHSKR method achieves the best image restoration.

\section{Conclusion}
\label{sec:5}
\quad A surrogate hyperplane sparse Kaczmarz method is proposed for solving regularized basis pursuit problem. By making use of the residuals, the residual-based surrogate hyperplane sparse Kaczmarz method is proposed. In particular, the partial residual-based surrogate hyperplane sparse Kaczmarz method are introduced based on the relaxed and greedy criterion. The convergence theories of the new methods are established and studied in details. Numerical experiments further verify the efficiency of the proposed methods.

\bibliographystyle{unsrt}
\bibliography{ref}

\begin{thebibliography}{10}

\bibitem{donoho2006compressed}
D.~L. Donoho.
\newblock Compressed sensing.
\newblock {\em IEEE Transactions on information theory}, 52(4):1289--1306, 2006.

\bibitem{candes2006robust}
E.~J. Cand{\`e}s, J.~Romberg, and T.~Tao.
\newblock Robust uncertainty principles: Exact signal reconstruction from highly incomplete frequency information.
\newblock {\em IEEE Transactions on information theory}, 52(2):489--509, 2006.

\bibitem{byrd2012sample}
{R. H. Byrd, G. M. Chin, J. Nocedal, and Y. Wu}.
\newblock Sample size selection in optimization methods for machine learning.
\newblock {\em Mathematical programming}, 134(1):127--155, 2012.

\bibitem{strohmer2009randomized}
T.~Strohmer and R.~Vershynin.
\newblock A randomized {Kaczmarz} algorithm with exponential convergence.
\newblock {\em Journal of Fourier Analysis and Applications}, 15(2):262--278, 2009.

\bibitem{zeng2023adaptive}
Y.~Zeng, D.-R. Han, Y.-S. Su, and J.-X. Xie.
\newblock On adaptive stochastic heavy ball momentum for solving linear systems.
\newblock {\em arXiv preprint arXiv:2305.05482}, 2023.

\bibitem{han2024randomized}
D.-R. Han, Y.-S. Su, and J.-X. Xie.
\newblock Randomized douglas--rachford methods for linear systems: Improved accuracy and efficiency.
\newblock {\em SIAM Journal on Optimization}, 34(1):1045--1070, 2024.

\bibitem{bai2023randomized}
Z.-Z. Bai and W.-T. Wu.
\newblock Randomized kaczmarz iteration methods: Algorithmic extensions and convergence theory.
\newblock {\em Japan Journal of Industrial and Applied Mathematics}, 40(3):1421--1443, 2023.

\bibitem{bai2023convergence}
Z.-Z. Bai and L.~Wang.
\newblock On convergence rates of kaczmarz-type methods with different selection rules of working rows.
\newblock {\em Applied Numerical Mathematics}, 186:289--319, 2023.

\bibitem{ferreira2024survey}
I.~A. Ferreira, J.~A. Acebr{\'o}n, and J.~Monteiro.
\newblock Survey of a class of iterative row-action methods: The kaczmarz method.
\newblock {\em arXiv preprint arXiv:2401.02842}, 2024.

\bibitem{lorenz2014sparse}
D.~A. Lorenz, S.~Wenger, F.~Sch{\"o}pfer, and M.~Magnor.
\newblock A sparse {Kaczmarz} solver and a linearized {Bregman} method for online compressed sensing.
\newblock In {\em 2014 IEEE international conference on image processing (ICIP)}, pages 1347--1351. IEEE, 2014.

\bibitem{lorenz2014linearized}
D.~A. Lorenz, F.~Sch{\"o}pfer, and S.~Wenger.
\newblock The linearized {Bregman} method via split feasibility problems: analysis and generalizations.
\newblock {\em SIAM Journal on Imaging Sciences}, 7(2):1237--1262, 2014.

\bibitem{petra2015randomized}
S.~Petra.
\newblock Randomized sparse block {Kaczmarz} as randomized dual block-coordinate descent.
\newblock {\em Analele {\c{s}}tiin{\c{t}}ifice ale Universit{\u{a}}{\c{t}}ii "Ovidius" Constan{\c{t}}a. Seria Matematic{\u{a}}}, 23(3):129--149, 2015.

\bibitem{schopfer2019linear}
F.~Sch{\"o}pfer and D.~A. Lorenz.
\newblock Linear convergence of the randomized sparse {Kaczmarz} method.
\newblock {\em Mathematical Programming}, 173:509--536, 2019.

\bibitem{lei2018learning}
Y.-W. Lei and D.-X. Zhou.
\newblock Learning theory of randomized sparse {Kaczmarz} method.
\newblock {\em SIAM Journal on Imaging Sciences}, 11(1):547--574, 2018.

\bibitem{wang2021}
{Z. Wang and J.-F. Yin}.
\newblock Sparse greedy randomized {Kaczmarz} method for sparse solutions of linear equations.
\newblock {\em Journal of the Tongji University (Nature Science)}, 49(11):1505--1513, 2021.

\bibitem{lunglmayr2017microkicking}
{M. Lunglmayr and M. Huemer}.
\newblock Microkicking for fast convergence of sparse {Kaczmarz} and sparse lms.
\newblock In {\em 2017 IEEE 7th International Workshop on Computational Advances in Multi-Sensor Adaptive Processing (CAMSAP)}, pages 1--5. IEEE, 2017.

\bibitem{tondji2023acceleration}
L.~Tondji, I.~Necoara, and D.~A. Lorenz.
\newblock Acceleration and restart for the randomized {Bregman-Kaczmarz} method.
\newblock {\em arXiv preprint arXiv:2310.17338}, 2023.

\bibitem{yuan2022adaptively}
Z.-Y. Yuan, L.~Zhang, H.-X. Wang, and H.~Zhang.
\newblock Adaptively sketched {Bregman} projection methods for linear systems.
\newblock {\em Inverse Problems}, 38(6):065005, 2022.

\bibitem{tondji2023adaptive}
L.~Tondji, I.~Tondji, and D.~A. Lorenz.
\newblock Adaptive {Bregman}-{Kaczmarz}: An approach to solve linear inverse problems with independent noise exactly.
\newblock {\em arXiv preprint arXiv:2309.06186}, 2023.

\bibitem{gower2023nonlinear}
{R. Gower, D. A. Lorenz, and M. Winkler}.
\newblock A {Bregman-Kaczmarz} method for nonlinear systems of equations.
\newblock {\em arXiv preprint arXiv:2309.06186}, 2023.

\bibitem{lorenz2023minimal}
{D. A. Lorenz and M. Winkler}.
\newblock Minimal error momentum {Bregman-Kaczmarz}.
\newblock {\em arXiv preprint arXiv:2307.15435}, 2023.

\bibitem{yun2023fast}
{Z. Yun, D. Han, Y.-S. Su, and J.-X. Xie}.
\newblock Fast stochastic dual coordinate descent algorithms for linearly constrained convex optimization.
\newblock {\em arXiv preprint arXiv:2307.16702}, 2023.

\bibitem{Rockafellar1998VariationalA}
R.~T. Rockafellar and R.~J.~B. Wets.
\newblock Variational analysis.
\newblock In {\em Grundlehren der mathematischen Wissenschaften}, 1998.

\bibitem{wang2023surrogate}
Z.~Wang and J.-F. Yin.
\newblock A surrogate hyperplane {Kaczmarz} method for solving consistent linear equations.
\newblock {\em Applied Mathematics Letters}, 144:108704, 2023.

\bibitem{gower2015randomized}
R.~M. Gower and P.~Richt{\'a}rik.
\newblock Randomized iterative methods for linear systems.
\newblock {\em SIAM Journal on Matrix Analysis and Applications}, 36(4):1660--1690, 2015.

\bibitem{chen2022fast}
{Q.-J. Chen and Z.-D. Huang}.
\newblock On a fast deterministic block {Kaczmarz} method for solving large-scale linear systems.
\newblock {\em Numerical Algorithms}, 89(3):1007--1029, 2022.

\bibitem{tondji2023faster}
L.~Tondji and D.~A. Lorenz.
\newblock Faster randomized block sparse {Kaczmarz} by averaging.
\newblock {\em Numerical Algorithms}, 93(4):1417--1451, 2023.

\end{thebibliography}
\end{document}